\setlist{noitemsep, topsep= 0pt, parsep=1pt, partopsep=1pt, leftmargin=0.1cm}
\newtheorem{theorem}{Theorem}[section]
\newtheorem{lemma}[theorem]{Lemma} 
\newtheorem{remark}[theorem]{Remark}
\newtheorem{definition}[theorem]{Definition}
\title{\textbf{Testing for exponentiality for stationary  associated random variables}}
\author{\small$\text{ Mansi Garg}\footnote{Corresponding author. }$ ${ }$ and $\text{Isha Dewan}.$ \vspace{-0.1in}\\
${}$\small Indian Statistical Institute, \vspace{-0.1in} \small Bangalore-560059 (India) and  \\
 ${}$\small Indian Statistical Institute, \vspace{-0.1in} \small New Delhi-110016 (India).\\ \small mansibirla@gmail.com and isha@isid.ac.in $\vspace{-0.8in}$ 
 }
\begin{document}
\fontsize{7}{10}
\date{}
\maketitle
\abstract{In this paper, we consider the problem of testing for exponentiality against univariate positive ageing when the underlying sample consists of stationary associated random variables.   In particular, we discuss the asymptotic behavior of the tests  by \cite{DESHPANDE01081983}, \cite{hollander1972testing} and \cite{ahmad1992} for testing exponentiality against IFRA, NBU and DMRL, respectively under association.  A simulation study illustrates the effect of dependence on the asymptotic normality of the test statistics and on the size and power of the tests. }\\
\\
\textbf{Keywords}: {\it Associated random variables; Central limit theorem; U-statistics; IFRA; NBU; DMRL. }

\numberwithin{equation}{section}
\section{Introduction}

The need to test for exponentiality against various univariate ageing classes occurs in many fields of research, such as reliability and survival analysis, queueing theory and economics among others. Traditionally, for testing for exponentiality  it was assumed that the the random variables of interest are independent and identically distributed $(i.i.d.)$.  However, in many real applications the assumption of independence is seldom satisfied.  The aim of this paper is to discuss the testing problem when the underlying random variables are associated. 

In the following, we discuss   the popular ageing classes studied in the paper, the concept of association  and various examples of associated random variables   occurring in the literature, and  then finally the tests for exponentiality against the  ageing classes under association. 

In reliability analysis, interest often lies in studying the ageing concepts of the lifetime of a component or a system as these help to analyze how it improves or deteriorates with time. Let $X$ be the lifetime of the component/ system under consideration with the distribution function $F(x)$ ($F(x) = 0$, $x < 0$), the survival function $\bar{F}(x)$, and the probability density function $f(x), x \ge 0$.  The failure rate function and the  mean residual lifetime function associated with $X$ are defined as $r(x) = f(x)/\bar{F}(x)$ and $\mu(x) = \int_{0}^{\infty} \frac{\bar{F}(x +t)}{\bar{F}(x)}dt$, whenever $\bar{F}(x) > 0$, $x \ge 0$, respectively. The ageing concepts are often described via the characteristics of the functions $\bar{F}(x)$, $r(x)$, and $\mu(x)$. 

Depending on the behavior of the chosen ageing criteria, the lifetime distribution can be categorized into various ageing classes. ``No ageing''  is synonymous to the lifetime distribution being exponential.  Positive (negative) ageing occurs when the system or component under consideration deteriorates (improves) over time. Some of the widely used classes of  positive ageing include the class of ``Increasing Failure Rate Average (IFRA)'', the class of ``New Better than Used (NBU)'', and the class of ``Decreasing Mean Residual Lifetime (DMRL)''. The negative dual of these classes are  DFRA, NWU, and IMRL respectively. These classes are defined as follows.

\begin{definition} $F$ is said to be IFRA (DFRA)  if $-(1/x)log\bar{F}(x)$  is increasing (decreasing)  in $x \ge 0$. This is equivalent to $\bar{F}(bx) \ge \bar{F}^b(x)$ $(\bar{F}(bx) \le \bar{F}^b(x))$ , $0 < b < 1$, $x \ge 0$.
\end{definition}
\begin{definition} $F$ is said to be NBU (NWU) if $\bar{F}(x+t) \le \bar{F}(x)\bar{F}(t)$ $(\bar{F}(x+t) \ge \bar{F}(x)\bar{F}(t))$ for all $x, t \ge 0$ and strict inequality for some $x, t \ge 0$.
\end{definition} 
\begin{definition} $F$ is said to be DMRL (IMRL) if the Mean Residual Life (MRL) function  $\mu(x)$ is decreasing (increasing)  in $x$, i.e., $\mu(s) \ge \mu(t)$ $(\mu(s) \le \mu(t))$  for $0 \le s \le t$.
\end{definition}    

 Optimal maintenance, replacement, and resource allocation  policies can be separately designed for each family of distributions. The knowledge of the lifetime belonging to a particular class of distributions can be used to choose appropriate parametric or a constrained nonparametric model for the underlying ageing process.

Testing for exponentiality against different ageing alternatives is also useful in  queueing theory.  For example, the service times and inter-arrival times in the classical queueing model, $M/M/1$, are assumed to come from mutually independent sequences of $i.i.d.$ exponential random variables. It leads to analytically tractable  expressions of the performance metrics, like the mean number of customers in the system, and the mean service and arrival rates. Extensions of the classical model include $M/G/1$, $M/G/k$,   and $M/G/\infty$, $G/G/k$ and $G_t/G/k$ among others. In all the models, the service times  have a \textbf{G}eneral distribution.  Several queueing models also assume that the inter-arrival times  have a general distribution.  For example, the queueing model $G/G/1$. In most queueing models, the probability distribution of the service times and the inter-arrival  times impact the output characteristics. Hence, the knowledge of the service time and the inter-arrival times belonging to a particular class of distributions  is useful in  developing a queueing model for the underlying system to determine its long term behavior.  For example, in \cite{abramov2006stochastic} stochastic inequalities for the number of losses for some single-server queueing models when the inter-arrival times or the services times are NBU or NWU have been derived. 

The classification of  distributions into various ageing classes is also of interest to researchers  in economics.  An application  is in testing for the duration dependence (see \cite{Ecojournal}). Another possible application is in choosing the appropriate marginal distribution for modeling various time series data. For example, processes like $GARCH$ and $ARCH$ with heavy tailed marginal distributions have been used to model many financial time series.

Many tests exist in literature that test for exponentiality (or the assumption of constant failure rate) against different positive or negative  ageing alternatives.  A detailed discussion on the various classes of ageing along with their testing procedures and applications for  $i.i.d.$ random variables  can be found in \cite{deshpande2005life} and \cite{lai2006stochastic}.   However, in many real applications,  the random variables under consideration are dependent.  

For example,  in reliability analysis, the  lifetimes of  independent components in a reliability structure when the components  share the same load or  are subject to a shared environmental stress are dependent (see  \cite{barlow1975statistical} and \cite{5976637}). Various autoregressive models with minification structures have positively correlated components.  For example, let $X_0$ be a non-degenerate and  non-negative random variable, and  $\{\epsilon_n, n \in \mathbb{N}\}$ be a sequence of independent and identically distributed $(i.i.d.)$, non-negative and non-degenerate   random variables  independent of $X_0$. Then, the non-negative random variables
\begin{equation*}
  X_n= k \: min(X_{n-1}, \epsilon_n) \text{ for all }n \in \mathbb{N}  \text{ and for some $k > 1$},\end{equation*}
 are  dependent. Minification processes have been used to model dependent lifetime data (for example, see \cite{CORDEIRO2014465}) and dependent service times (for example, see \cite{mironimpact}). 

In all these cases, the random variables  under consideration are associated -  a concept defined by   \cite{MR0217826} as follows. 

   \begin{definition} A finite collection of random variables $\{X_j,  1 \le j \le n \}$  is said to be associated, if for any choice of component-wise non-decreasing functions $h$, $g$ $:$ ${\mathbb{R}}^n \rightarrow {\mathbb{R}}$, we have,
\begin{equation*}
Cov(h(X_1,\ldots, X_n), g(X_1,\ldots, X_n)) \ge 0
\end{equation*}
whenever it exists. An infinite collection of random variables $\{X_j,  j \ge 1 \}$ is associated if every finite sub-collection is associated. 
\end{definition}
Any set of independent random variables is associated (\cite{MR0217826}). Non-decreasing functions of associated random variables are associated, for example, order statistics corresponding to a finite set of independent random variables are associated (\cite{MR0217826}).  Few other examples of associated random variables are:  positively correlated normal random variables (\cite{pitt1982}); the components of \cite{marshall12345}  multivariate exponential distribution, multivariate extreme-value distribution (\cite{marshall1983}) and  Downton multivariate exponential distribution (\cite{downton}); the components of the moving average process $\{X_n= a_0\epsilon_n+  a_1\epsilon_{n-1}, n \in \mathbb{N}\}$, where $\epsilon_n$, $n \in \mathbb{N}\cup \{0\}$ are independent random variables and $a_0$, $a_1$ have the same sign.  A detailed compilation of results and applications for associated random variables can be found in \cite{Bulinski:1701595}, \cite{MR3025761} and \cite{oliveira2012asymptotics}.

While the control of dependence in stochastic processes is generally  given in terms of mixing conditions, an obvious drawback is that the mixing coefficients are defined using $\sigma$-fields.  It makes these coefficients  difficult to  compute in practice.   For  associated random variables, the  control of dependence is through the covariance structure of the  random variables.  The simplicity of the conditions under which  the limit theorems can be proved gives an advantage  over the popularly used mixing processes. 

In this paper, we discuss the limiting behavior of some of the tests of exponentiality against univariate positive ageing based on U-statistics when the underlying random variables are stationary and associated.   In particular, we look at tests by \cite{DESHPANDE01081983}, \cite{hollander1972testing} and \cite{ahmad1992} for testing exponentiality against IFRA, NBU and DMRL  respectively.  The kernels of the test statistics of the given tests  belong to the class of kernels which are bounded (but  are not  of bounded variation). For tests based on U-statistics for $i.i.d.$ random variables, the test statistics can be shown to be asymptotically normally distributed using the results of  \cite{MR0026294}. However, it is not possible to directly extend  the theory of asymptotic normality for  U-statistics based on dependent random variables.  Hence, the asymptotic behavior of U-statistics for associated random variables needs to be looked into separately.  

We first develop a central limit  theorem  for U-statistics based on the class of kernels discussed above for  stationary associated random variables.  We next use this result to obtain critical points, size and power for  the given tests. This helps in analyzing the behavior of the considered  tests under the dependent setup. 

For the rest of the paper, assume $\{X_n, n \in \mathbb{N}\}$ is a stationary sequence of associated random variables with  the distribution function of $X_1$ denoted by $F$.  We also assume that  $X_n, n \in \mathbb{N}$ are uniformly bounded, $i.e.$ there exists a $ 0 < C_1 < \infty$, such that $P(|X_1| \le C_1) = 1$. For applications in reliability and survival analysis this assumption is reasonable. 

 The paper is organized as follows. In the next section, Section $\ref{CLT}$,  we give a general theorem for the asymptotic distribution of U-statistics based on bounded kernels for stationary associated random variables.  In Section $\ref{tests}$, we apply this result to discuss the limiting behavior of the tests by \cite{DESHPANDE01081983}, \cite{hollander1972testing} and \cite{ahmad1992} under association.  In Section $\ref{sim}$, the asymptotic normality of the test statistics and the power of the tests under the discussed dependent setup is illustrated via simulations.  Section $\ref{dis}$ is a brief discussion on the applications of our results and our intended future work. Section $\ref{proofs}$ contains some preliminary results and the  proofs of our technical results. 

\section{Central limit theorem for U-statistics based on bounded kernels}\label{CLT}

The main result of this section, Theorem $\ref{theorem25}$,  gives the central limit theorem for U-statistics based on a bounded kernel of degree $2$, when the underlying sample is  a sequence of stationary associated random variables.  The extension of this theorem to U-statistics with kernels of a general finite degree k $>$ $2$ are also discussed. These results are applied in section $\ref{tests}$ to show the asymptotic normality of the test statistics  of the considered tests of exponentiality against positive ageing under the dependent setup.   Proof of the results are postponed to section $\ref{proofs}$.

 The central limit theorem  for  U-statistics discussed extends the results of \cite{MR1861150, Dewan20029, Dewan2015147}, \cite{Garg2015209, GargdewanSPL2} to a wider class of kernels.

The U-statistic $U_n(\rho)$ of degree $2$ based on $\lbrace{X_j, 1 \le j \le n}\rbrace$  $( n \ge 2)$ with a symmetric kernel $\rho: \mathbb{R}^2 \to \mathbb{R}$ is defined as,
\begin{equation}
U_n(\rho) = {{n}\choose{2}}^{-1}\sum_{1\le {j_1}< {j_2}\le n} {\rho}(X_{j_1}, X_{j_2}).
\end{equation}

 Let $\theta = \int\limits_{\mathbb{R}^2}{\rho}(x_1, x_2)~{dF({x_1})dF({x_2})}$.  Define
\begin{align*}
&  {\rho}_{1}({x_1}) = \int\limits_{\mathbb{R}}{\rho}(x_1, x_2)~dF({x_{2}}), \: \:  h^{(1)}({x_1}) =  {\rho}_1({x_1}) - \theta,\\
&  \text{and }   \: h^{(2)}({x_1}, {x_2}) =  {\rho}({x_1}, {x_2}) - {\rho}_1({x_1}) - {\rho}_1({x_2}) + \theta.
  \end{align*}
   Then, the Hoeffding-decomposition for $U_n(\rho)$ is $U_n(\rho) = \theta + 2H_n^{(1)} + H_n^{(2)}$,
    where $H_{n}^{(j)}$ is the U-statistic of degree $j$ based on the kernel $h^{(j)}$, $j = 1, 2$. When the observations are $i.i.d.$, $E(U_n(\rho)) = \theta$.

Similarly, the Hoeffding's decomposition for U-statistics of a finite  degree $k > 2$ can be obtained.

\subsection{Central limit theorem}

Before proceeding, we need to define the following.

Let $f_{\textbf{Z}}$ denote the $p.d.f.$, and let $\Phi_{\textbf{Z}}$ denote the characteristic function of  random vector  $\textbf{Z} \in \mathbb{R}^k$, respectively. 

\begin{definition}The $p.d.f$ $f_{\textbf{Z}}$ of the random vector  $\textbf{Z}$ is said to satisfy a Lipshitz condition of order 1, if for every $\textbf{x}, \textbf{u} \in \mathbb{R}^k$ and some finite constant $C > 0$,
\begin{align}\label{lip}
|f_{\textbf{Z}}(\textbf{x + u}) - f_{\textbf{Z}}(\textbf{x })| \le C \sum_{j=1}^k |u_j|.
\end{align}
\end{definition}

\begin{definition} (\cite{MR789244})  If $f$ and $\tilde{f}$ are two real-valued functions on $\mathbb{R}^{n}$, then $ f \ll \tilde{f}$ iff $\tilde{f} + f$ and $\tilde{f} - f$ are both coordinate-wise non-decreasing. 
\end{definition}

If $f \ll \tilde{f}$, then $\tilde{f}$ will be coordinate-wise non-decreasing. 

We next define the conditions $(T1)$ and $(T2)$ that will be needed to prove Theorem $\ref{theorem25}$. 

 In the following, let $\lbrace{X_n', n \in \mathbb{N}}\rbrace$ be a  sequence of $i.i.d.$ random variables independent of the sequence $\lbrace{X_n, n \in \mathbb{N}}\rbrace$,  with the marginal distribution function of $X_1'$ being $F$. 

   \begin{itemize}
   \item[(T1)] For all distinct $i_1, i_2, i_3, i_4$, such that $1 \le i_1 < i_2 \le n$ and $1 \le i_3 < i_4 \le n$, 
    \begin{itemize}
 \item[(I1)] $f_{X_{i_1},  X_{i_2}, X_{i_3}, X_{i_4}}$, $f_{X_{i_1}', X_{i_2}, X_{i_3}, X_{i_4}}$, $f_{X_{i_1}, X_{i_2}', X_{i_3}, X_{i_4}}$ and $f_{X_{i_1}, X_{i_2}, X_{i_3}', X_{i_4}}$  are bounded and satisfy the  Lipshitz condition of order 1 and
  \item[(I2)] $\Phi_ {X_{i_1},  X_{i_2}, X_{i_3}, X_{i_4}}$, $\Phi_ {X_{i_1}',  X_{i_2}, X_{i_3}, X_{i_4}}$, $\Phi_ {X_{i_1},  X_{i_2}', X_{i_3}, X_{i_4}}$ and $\Phi_ {X_{i_1},  X_{i_2}, X_{i_3}', X_{i_4}}$  are absolutely integrable. 
  \end{itemize}
  \item[(T2)]     For any 3 distinct indices $i, j, k$ from $(i_1, i_2, i_3, i_4)$, such that $1 \le i_1 < i_2 \le n$ and $1 \le i_3 < i_4 \le n$, 
    \begin{itemize}
 \item[(J1)] $f_{X_{i},  X_{j}, X_{k}}$ and $f_{X_{i}',  X_{j}, X_{k}}$,   are bounded and satisfy the  Lipshitz condition of order 1 and, 
  \item[(J2)] $\Phi_ {X_{i},  X_{j}, X_{k}}$ and $\Phi_ {X_{i}',  X_{j}, X_{k}}$  are absolutely integrable. 
  \end{itemize}
    \end{itemize}

\begin{theorem} \label{theorem25} Let   $U_n(\rho)$ be the U-statistic based on a symmetric kernel $\rho(., .)$ which is   bounded ($i.e.$ $|\rho(x, y)|\le C_2$, for some $C_2 <\infty$ for all $x, y \in \mathbb{R}$). Define $\theta = \int \int \rho(x, y)dF(x)dF(y)$, \\$\sigma_1^2 = Var(\rho_1(X_1))$ and $\sigma_{1j} = Cov( \rho_1(X_1), \rho_1(X_{1+j}))$ for all $j \in \mathbb{N}$.  

Assume the following.
\begin{itemize}
\item[(i)] $\sum_{j=1}^\infty Cov({X}_1, {X}_j)^{\frac{1}{21}} < \infty$;
\item[(ii)]  $(T1)$ and $(T2)$ hold; and
\item[(iii)] $\sigma_1^2 < \infty$ and  $\sum_{j=1}^{\infty}|\sigma_{1j}| < \infty$.
\end{itemize}
Then
\begin{equation}\label{refnormvar}
Var(U_n(\rho)) = \frac{4\sigma^2_U}{n} + o\Big(\frac{1}{n}\Big), \text{ where } \sigma^2_U = \sigma^2_1 + 2\sum_{j=1}^{\infty}\sigma^2_{1j}.
\end{equation}
Further, if  $\sigma^2_U > 0$ and  there exists a function $\tilde{\rho}_1: \mathbb{R} \to \mathbb{R}$, such that ${\rho}_1$ $\ll$ $\tilde{\rho}_1$ and
\begin{equation}
\sum_{j=1}^{\infty}Cov(\tilde{\rho}_1({X}_1), \tilde{\rho}_1({X}_{j})) < \infty,
\end{equation}
 then
\begin{equation}\label{refnormpdf}
\frac{\sqrt{n}( U_n(\rho) - \theta)}{2\sigma_U} \xrightarrow {\mathcal{ L}} N(0, 1) \:\: \text{as}\: \:{n \to \infty}.
\end{equation}
\end{theorem}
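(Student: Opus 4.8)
The plan is to build the argument on the Hoeffding decomposition already recorded above, $U_n(\rho) - \theta = 2H_n^{(1)} + H_n^{(2)}$, where $H_n^{(1)} = n^{-1}\sum_{i=1}^n h^{(1)}(X_i)$ is the linear (projection) term and $H_n^{(2)} = \binom{n}{2}^{-1}\sum_{i<j} h^{(2)}(X_i,X_j)$ is the degenerate remainder. The whole statement then splits into two essentially separate tasks: proving that the scaled projection term obeys a central limit theorem with limiting variance $4\sigma_U^2$, and proving that $H_n^{(2)}$ is negligible at the scale $\sqrt n$. Since $E[\rho_1(X_1)]=\theta$ forces $E[H_n^{(1)}]=0$, the projection term is already centred; the centring at $\theta$ rather than at $E[U_n(\rho)]$ (which differ because the $X_i$ are dependent) will be justified by controlling the bias $E[H_n^{(2)}]$ on its own.

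For the variance formula (\ref{refnormvar}) I would first split
\begin{equation*}
Var(U_n(\rho)) = 4\,Var(H_n^{(1)}) + 4\,Cov(H_n^{(1)},H_n^{(2)}) + Var(H_n^{(2)}),
\end{equation*}
and evaluate the leading term directly. Writing $Var(H_n^{(1)}) = n^{-2}\big[n\sigma_1^2 + 2\sum_{j=1}^{n-1}(n-j)\sigma_{1j}\big]$ by stationarity, hypothesis $(iii)$ lets me pass to the limit (the weights $j/n$ are discarded by dominated convergence, using $\sum_j|\sigma_{1j}|<\infty$), giving $Var(H_n^{(1)}) = \sigma_U^2/n + o(1/n)$. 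It then remains to show that both $Cov(H_n^{(1)},H_n^{(2)})$ and $Var(H_n^{(2)})$ are $o(1/n)$, and these two estimates are the technical heart of the proof.

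The main obstacle is exactly these degenerate-term estimates, because $h^{(2)}$ is bounded but \emph{not} of bounded variation, so the classical Newman covariance inequality for smooth or monotone functionals does not apply directly. My plan is to represent each bounded factor through Fourier inversion, $h^{(2)}(X_{i_1},X_{i_2}) = (2\pi)^{-2}\int\!\!\int \widehat{h}^{(2)}(t_1,t_2)\,e^{\iota(t_1 X_{i_1}+t_2 X_{i_2})}\,dt_1\,dt_2$, and to bound covariances of the complex exponentials through the basic association estimate $|\Phi_{X,Y}(s,t)-\Phi_X(s)\Phi_Y(t)|\le |s||t|\,Cov(X,Y)$. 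Inserting the boundedness and order-one Lipschitz smoothness of the three- and four-dimensional densities together with the absolute integrability of the corresponding characteristic functions, exactly as packaged in $(T1)$ and $(T2)$, I expect a covariance bound of the form $|Cov(\cdot,\cdot)|\le C\,Cov(X_i,X_j)^{1/21}$, the fractional exponent emerging from the truncation/interpolation step of that covariance lemma. Summing these bounds over the $O(n^4)$ quadruples for $Var(H_n^{(2)})$ and the $O(n^3)$ triples for $Cov(H_n^{(1)},H_n^{(2)})$, hypothesis $(i)$ collapses the multiple sums to $o(1/n)$. The same lemma applied to pairs bounds the per-pair bias $|E[\rho(X_1,X_{1+j})]-\theta|$ by $C\,Cov(X_1,X_{1+j})^{1/21}$, so that $\sqrt n\,E[H_n^{(2)}]=O(n^{-1/2})\to 0$; combined with $n\,Var(H_n^{(2)})\to 0$ this gives $\sqrt n\,H_n^{(2)}\to 0$ in $L^2$, hence in probability.

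Finally, for the limit law (\ref{refnormpdf}) I would prove a central limit theorem for the stationary associated sequence $\{\rho_1(X_i)\}$, namely $n^{-1/2}\sum_{i=1}^n h^{(1)}(X_i)\xrightarrow{\mathcal L}N(0,\sigma_U^2)$. Because $\rho_1$ need not be monotone, this is precisely where the domination hypothesis $\rho_1\ll\tilde\rho_1$ with $\sum_j Cov(\tilde\rho_1(X_1),\tilde\rho_1(X_j))<\infty$ enters: writing $\rho_1=\tfrac12\big[(\tilde\rho_1+\rho_1)-(\tilde\rho_1-\rho_1)\big]$ expresses $\rho_1$ as a difference of coordinate-wise non-decreasing functions of the associated variables, so the transformed sequences remain associated and I may invoke Newman's central limit theorem for associated sequences, the finiteness and positivity of the limiting variance being supplied by $(iii)$ and $\sigma_U^2>0$. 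Multiplying by $2$ yields $2\sqrt n\,H_n^{(1)}\xrightarrow{\mathcal L}N(0,4\sigma_U^2)$, and Slutsky's theorem together with $\sqrt n\,H_n^{(2)}\xrightarrow{P}0$ delivers $\sqrt n\big(U_n(\rho)-\theta\big)/(2\sigma_U)\xrightarrow{\mathcal L}N(0,1)$.
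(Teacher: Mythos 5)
Your overall architecture --- Hoeffding decomposition, exact computation of $Var(H_n^{(1)})$ from stationarity and hypothesis (iii), $L^2$-negligibility of $\sqrt{n}\,H_n^{(2)}$ together with a separate bias estimate, and a CLT for $\{\rho_1(X_i)\}$ via the monotone domination $\rho_1\ll\tilde{\rho}_1$ --- coincides with the paper's. The last step is exactly right: the paper invokes Theorem 17 of Newman (1984), which is the difference-of-monotone-functions argument you describe, and the summation bookkeeping (quadruples for $Var(H_n^{(2)})$, triples for the cross term, pairs for the bias) matches the roles of (T1), (T2) and (i).

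The gap is in the mechanism you propose for the degenerate-term estimate, which is the technical heart of the result. Fourier-inverting $h^{(2)}$ itself and applying the association inequality to the complex exponentials requires $\int |t_1||t_2|\,|\widehat{h}^{(2)}(t_1,t_2)|\,dt_1dt_2<\infty$, i.e. essentially $C^1$-type smoothness of the kernel; for the indicator-type kernels this theorem is built to handle, $\widehat{h}^{(2)}$ is not absolutely integrable and this route collapses --- that obstruction is precisely why (T1)--(T2) place the smoothness on the \emph{densities} rather than on the kernel. The paper's device, which your outline never uses, is the degeneracy identity: since $X_{i_1}'$ is independent of the remaining variables and $\int h^{(2)}(x,y)\,dF(x)=0$, one has $E\big[h^{(2)}(X_{i_1}',X_{i_2})h^{(2)}(X_{i_3},X_{i_4})\big]=0$, so the second moment equals the integral of the bounded function $h^{(2)}\otimes h^{(2)}$ against the density difference $f_{X_{i_1},X_{i_2},X_{i_3},X_{i_4}}-f_{X_{i_1}',X_{i_2},X_{i_3},X_{i_4}}$ over $[-C_1,C_1]^4$. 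That difference is controlled in sup-norm by combining Newman's characteristic-function inequality with the Roussas inversion lemma (this is where the Lipschitz densities and integrable characteristic functions of (T1)--(T2) enter, and where the truncation at level $T$, optimized as $T=[\,\cdot\,]^{-1/7}$, produces the exponent $1/7$); a geometric mean over the three possible choices of pivot coordinate then converts products of $Cov^{1/7}$ factors into the $Cov^{1/21}$ bounds matched by hypothesis (i). Without the degeneracy identity there is no way to bring the primed-variable densities of (T1)--(T2) into play at all, so this step of your plan needs to be replaced rather than merely elaborated; the remainder of your accounting is fine once that estimate is in hand.
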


\begin{remark} \label{remark22} Theorem $\ref{theorem25}$ can be easily extended to a U-statistic based on a kernel of any finite degree $k > 2$. Let $U_n(\rho)$ be the U-statistic based on the symmetric kernel $\rho(x_1, x_2,\dots, x_k)$ which is bounded. 

Let $\lbrace{X_n', n \in \mathbb{N}}\rbrace$ be a  sequence of $i.i.d.$ random variables independent of the sequence $\lbrace{X_n, n \in \mathbb{N}}\rbrace$,  with the marginal distribution function of $X_1'$ being $F$. 

Assume for all $p = 2, \cdots, k$ the following are true.

\begin{itemize}
\item For all  distinct indices $i_1, i_2, \cdots, i_{2p}$, such that $1 \le i_1 < \cdots < i_{p} \le n$ and $1 \le i_{p+1} < \cdots < i_{2p} \le n$, 
    \begin{itemize}
 \item[(1)] $f_{X_{i_1},  \cdots X_{i_{2p}}}$, $f_{X_{i_1}', X_{i_2},\cdots, X_{i_{2p}}}$, $f_{X_{i_1}, X_{i_2}', X_{i_3},\cdots, X_{i_{2p}}}$ and  $f_{X_{i_1}, X_{i_2}, X_{i_3}', X_{i_4}, \cdots, X_{i_{2p}}}$ are bounded and satisfy the  Lipshitz condition of order 1, and 
  \item[(2)] $\Phi_{X_{i_1},  \cdots X_{i_{2p}}}$, $\Phi_{X_{i_1}', X_{i_2},\cdots, X_{i_{2p}}}$, $\Phi_{X_{i_1}, X_{i_2}', X_{i_3},\cdots, X_{i_{2p}}}$  and $\Phi_{X_{i_1}, X_{i_2}, X_{i_3}', X_{i_4}, \cdots, X_{i_{2p}}}$ are absolutely integrable. 
  \end{itemize}
    \item For $(2p-1)$ distinct indices $j_1, j_2, \cdots, j_{2p-1}$ from  $(i_1, i_2, \cdots, i_{2p})$, such that $1 \le i_1 < \cdots < i_{p} \le n$ and $1 \le i_{p+1} < \cdots < i_{2p} \le n$, 
     \begin{itemize}
 \item[(1)] $f_{X_{j_1},  \cdots X_{j_{2p-1}}}$ and $f_{X_{j_1}', X_{i_2},\cdots, X_{j_{2p-1}}}$ are bounded and satisfy the  Lipshitz condition of order 1, and
  \item[(2)] $\Phi_{X_{j_1},  \cdots X_{j_{2p-1}}}$ and $\Phi_{X_{j_1}', X_{j_2},\cdots, X_{j_{2p-1}}}$ are absolutely integrable. 
  \end{itemize}
    \end{itemize}
 Further, if $\sum_{j=1}^\infty Cov({X}_1, {X}_j)^{\frac{1}{3(3+2k)}}$ $<$ $\infty$,  $\sigma_1^2$ $<$ $\infty$ and $\sum_{j=1}^{\infty}|\sigma_{1j}| <$ $\infty$, then
\begin{equation}
Var(U_n(\rho)) = \frac{k^{2}\sigma^2_U}{n} + o\Big(\frac{1}{n}\Big).
\end{equation}
If $\sigma^2_U > 0$ and there exists a function $\tilde{\rho}_1: \mathbb{R} \to \mathbb{R}$, such that ${\rho}_1$ $\ll$ $\tilde{\rho}_1$ and
\begin{equation}
\sum_{j=1}^{\infty}Cov(\tilde{\rho}_1({X}_1), \tilde{\rho}_1({X}_{j})) < \infty,
\end{equation}
then
\begin{equation}
\frac{\sqrt{n}( U_n(\rho) - \theta)}{k\sigma_U} \xrightarrow {\mathcal{ L}} N(0, 1) \:\: \text{as}\: \:{n \to \infty}.
\end{equation}  
 \end{remark}
 \begin{remark} The results can be extended to non-uniformly bounded random variables under stricter covariance restrictions, by using the standard truncation technique and putting appropriate assumptions on the moments of the underlying random variables.
 \end{remark}

\section{Tests for ageing}\label{tests}
\setlength\parindent{24pt}
\cite{DESHPANDE01081983}, \cite{hollander1972testing} and \cite{ahmad1992} had proposed tests for testing exponentiality against IFRA, NBU and DMRL respectively, for a sample of  $i.i.d.$ observations.  In this section, we prove the asymptotic normalty of the test statistics of  these tests when the underlying sample consists of stationary associated random variables.  

Let $\mu = \mu(0) = E(X_1)$ in the following.

  \subsection{{Testing Exponentiality against  IFRA alternatives}}
Our aim is to test 
\begin{equation*}
H_0: F(x) = 1-exp(-x/\mu),  \hspace{3 mm} x\ge \: 0, \text{ }\mu > 0, \text{ against}
\end{equation*}
\begin{equation*}
 H_1: \text{F is IFRA but not exponential}.
\end{equation*}
The test statistic, $J_{(n, b)}$ $(0 < b < 1)$, of the test proposed by \cite{DESHPANDE01081983} is
\begin{equation}
J_{(n, b)} = \frac{1}{{n\choose2} }\sum_{1 \le i < j \le n}\rho(X_i, X_j),
\end{equation}
where
\begin{equation*}
\rho(x, y) = \frac{ h_1(x, y) + h_1(y, x)}{2},
\end{equation*}
and
\[h_1(x,y)  = \left \{\begin{array}{ll}
1 & \mbox {if}\; x > by \nonumber \\
 0 & \mbox{otherwise}.
 \end{array} \right. \]

When $\{X_j, 1\le j \le n\}$ are $i.i.d.$, the asymptotic distribution of $J_{(n, b)}$ under $H_0$, as discussed in \cite{DESHPANDE01081983} is\\
\begin{equation}
\frac{\sqrt{n}(J_{(n, b)} - \frac{1}{b+1}) }{2\sqrt{{\xi}_1}} \xrightarrow{\mathcal{ L}}  N(0,1), \text{ as $n \to \infty$},
\end{equation}
where
\begin{equation*}
\xi_1 = Var(\rho_1(X_1))= \frac{1}{4}\Bigg{\{} 1+ \frac{b}{2+b} +\frac{1}{2b+1} + \frac{2(1-b)}{(1+b)} - \frac{2b}{(1+b+b^2)}-\frac{4}{(b+1)^2} \Bigg{\}}, 
\end{equation*}
and
\begin{equation}
\rho_1(x) = \frac{\bar{F}(bx) + F(\frac{x}{b})}{2}, \: x \ge 0.
\end{equation}
We now obtain a limiting distribution for $J_{(n, b)}$ when the observations are associated. 
\begin{theorem}
Let $\{X_n, n \ge 1\}$ be a sequence of stationary associated random variables, such that $P(|X_n| \le C_1) = 1$, for some $0 < C_1 < \infty$, for all $n \ge 1$. Assume that conditions of Theorem $\ref{theorem25}$ are satisfied. Then, the limiting distribution of $J_{(n, b)}$ under $H_0$ is
\begin{equation*}
D_{(n, b)} = \frac{\sqrt{n}( J_{(n, b)} - \frac{1}{b+1} )}{2\sigma_D}  \xrightarrow {\mathcal{ L}} N(0, 1) \:\: \text{as}\: \:{n \to \infty},
\end{equation*}
 where  $\sigma_D^2 = Var(\rho_1(X_1)) + 2\sum_{j=1}^{\infty}Cov(\rho_1(X_1), \rho_1(X_{j+1}))$.
\proof

By Hoeffding's decomposition, $J_{(n, b)} = \theta + 2H^{(1)}_n + H^{(2)}_n$.

The underlying kernel $\rho$ is  not continuous and not of local bounded variation. However, it is bounded. Also, $\rho_1$ is a Lipschitz function (i.e ${\rho}_1(x)$ $\ll$ $Cx$, for all $x \in [-C_1, C_1]$ and for some $C > 0$). From Theorem $\ref{theorem25}$,
\begin{equation*}
D_{(n, b)}  \xrightarrow {\mathcal{ L}} N(0, 1) \:\: \text{as}\: \:{n \to \infty}.
\end{equation*}
\end{theorem}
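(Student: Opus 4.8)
The plan is to verify that the Deshpande test statistic $J_{(n,b)}$ fits exactly into the framework of Theorem~\ref{theorem25}, so that the conclusion follows by direct application. The statistic $J_{(n,b)}$ is a U-statistic of degree $2$ with the symmetric kernel $\rho(x,y) = \tfrac12\big(h_1(x,y) + h_1(y,x)\big)$, where $h_1$ is an indicator function. First I would record that $\rho$ is bounded, indeed $0 \le \rho(x,y) \le 1$, so that the boundedness hypothesis $|\rho(x,y)| \le C_2$ of Theorem~\ref{theorem25} holds with $C_2 = 1$. Next I would compute $\theta = \int\!\int \rho(x,y)\,dF(x)\,dF(y)$ under $H_0$. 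Since under $H_0$ the marginal is exponential, a direct calculation gives $\theta = P(X_1 > b X_2) = \tfrac{1}{b+1}$ (using the symmetrization), which identifies the centering constant $\tfrac{1}{b+1}$ appearing in $D_{(n,b)}$. I would likewise identify $\rho_1(x) = \int \rho(x,y)\,dF(y) = \tfrac12\big(\bar F(bx) + F(x/b)\big)$, matching the expression quoted from \cite{DESHPANDE01081983}, and note that $\sigma_D^2$ is precisely the quantity $\sigma_U^2 = \sigma_1^2 + 2\sum_{j=1}^\infty \sigma_{1j}$ of the general theorem.

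The substance of the argument is checking the three hypotheses of Theorem~\ref{theorem25}. Hypotheses (i) and (iii)---the covariance summability condition $\sum_j \mathrm{Cov}(X_1,X_j)^{1/21} < \infty$ and the conditions $\sigma_1^2 < \infty$, $\sum_j |\sigma_{1j}| < \infty$---are inherited directly from ``conditions of Theorem~\ref{theorem25} are satisfied,'' which is an explicit standing assumption of the present theorem, so these require no further work. The regularity conditions (T1) and (T2) on the joint densities and characteristic functions of the underlying stationary associated sequence are likewise part of that standing assumption. Thus the only genuinely test-specific verification is the final structural requirement: the existence of a coordinate-wise domination $\rho_1 \ll \tilde\rho_1$ for some $\tilde\rho_1$ with $\sum_{j=1}^\infty \mathrm{Cov}(\tilde\rho_1(X_1), \tilde\rho_1(X_j)) < \infty$.

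The key step, therefore, is to exhibit such a $\tilde\rho_1$. I would argue that $\rho_1(x) = \tfrac12(\bar F(bx) + F(x/b))$ is a Lipschitz function on the compact support $[-C_1, C_1]$: since $F$ has a bounded density under $H_0$ (the exponential density is bounded on $[0,\infty)$), both $\bar F(bx)$ and $F(x/b)$ are Lipschitz in $x$, hence so is their average, giving a Lipschitz constant $C$ with $|\rho_1(x) - \rho_1(y)| \le C|x-y|$. This yields the domination $\rho_1(x) \ll C x$ on $[-C_1,C_1]$, i.e.\ both $Cx + \rho_1(x)$ and $Cx - \rho_1(x)$ are non-decreasing, so one may take $\tilde\rho_1(x) = Cx$. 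With this choice $\mathrm{Cov}(\tilde\rho_1(X_1), \tilde\rho_1(X_j)) = C^2\,\mathrm{Cov}(X_1, X_j)$, and the required summability $\sum_j \mathrm{Cov}(\tilde\rho_1(X_1), \tilde\rho_1(X_j)) < \infty$ reduces to $\sum_j \mathrm{Cov}(X_1, X_j) < \infty$, which is implied by hypothesis (i) since $\mathrm{Cov}(X_1,X_j)^{1/21} < \infty$ summable with bounded (hence uniformly bounded covariance) variables forces the stronger summability of $\mathrm{Cov}(X_1,X_j)$ itself.

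I expect the main obstacle to be precisely this last point---cleanly justifying the Lipschitz property of $\rho_1$ and the passage from the domination to the covariance-summability condition---since the kernel $\rho$ itself is discontinuous and not of bounded variation (as the theorem statement stresses), so one cannot work with $\rho$ directly and must exploit the smoothing effect of integrating one argument against $F$ to recover regularity of $\rho_1$. Once the Lipschitz domination $\rho_1 \ll \tilde\rho_1$ with $\tilde\rho_1(x)=Cx$ is in place, every hypothesis of Theorem~\ref{theorem25} is met, and the asymptotic normality $D_{(n,b)} \xrightarrow{\mathcal L} N(0,1)$ follows immediately from \eqref{refnormpdf}.
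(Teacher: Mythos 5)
Your proposal is correct and follows essentially the same route as the paper: verify the kernel is bounded, establish the Lipschitz domination $\rho_1 \ll Cx$ on $[-C_1,C_1]$ so that $\tilde\rho_1(x)=Cx$ works, reduce the covariance-summability requirement to $\sum_j \mathrm{Cov}(X_1,X_j)<\infty$ (which follows from hypothesis (i) for bounded associated variables), and invoke Theorem~\ref{theorem25}. You merely spell out the details (the value of $\theta$, the identification of $\rho_1$ and $\sigma_D^2$, and the passage from the $1/21$-power summability to ordinary summability) that the paper's terser proof leaves implicit.
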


\emph{Rejection criteria}: Since ${\sigma}_D$ is unknown, we use the following test statistic
\begin{equation*}
\hat{D}_{(n, b)} = \frac{\sqrt{n}( J_{(n, b)} - \frac{1}{b+1} )}{2{\hat{\sigma}}_D},
\end{equation*}
where $\hat{\sigma}_D$ is a consistent estimator for $\sigma_D$. Reject $H_0$ at a significance level $\alpha$ if $\hat{D}_{(n, b)}$ $\ge$ $z_{1-\alpha}$, where $z_{1- \alpha}$ is  $100(1-\alpha)^{th}$ percentile of $N(0,1)$.

 \subsection{{Testing exponentiality against NBU alternatives}}
Our aim is to test 
\begin{equation*}
H_0: \bar{F}(s+t) = \bar{F}(s)\bar{F}(t), \: \: \: s, t \ge 0, \text{ (i.e $F$ is exponential)} \text{ against}
\end{equation*}
\begin{equation*}
 H_2:\bar{F}(s+t) \le \bar{F}(s)\bar{F}(t), \: \: \: s, t \ge 0; \text{ with strict inequality for some $s, t$.}
 \end{equation*}
The test by \cite{hollander1972testing} rejects $H_0$ for small values of the statistic, $S_n$, defined by $S_n = \frac{n(n-1)(n-2)}{2}N_n$, where
\begin{equation}
N_n =  \frac{6}{n(n-1)(n-2)} \sum_{1 \le i <j <k \le n} \rho (X_i, X_j, X_k).
\end{equation}
$\rho (x_1, x_2, x_3) = \frac{1}{3} [ \phi(x_1, x_2, x_3) + \phi(x_2, x_1, x_3) + \phi(x_3, x_1, x_2)]$ and 
 \[ \phi(x_1, x_2, x_3) = \left\{ 
  \begin{array}{l l}
    1 & \quad \text{if $x_1 > x_2 + x_3,$}\\
    0 & \quad \text{otherwise.}  \end{array} \right. \]

When $\{X_j, 1\le j \le n\}$ are $i.i.d.$, the asymptotic distribution of $N_n$ can be obtained by the central limit theorem for U-statistics as discussed in \cite{MR0026294}. In particular, under $H_0$, we get
\begin{equation}
\frac{\sqrt{n}( N_n - \frac{1}{4} )}{\sqrt{5/432}} \xrightarrow {\mathcal{ L}} N(0, 1) \:\: \text{as}\: \:{n \to \infty}.
\end{equation}
 The kernel is of degree $3$. 
 \begin{equation}
 \rho_1(x) = \frac{1}{3}\Big( \int_0^{x}F(x-z)dF(z) + \int_0^{\infty}\bar{F}(x+z)dF(z) + \int_{x}^{\infty}F(z-x)dF(z)\Big)
  \end{equation}
 for $x \ge 0$.
 \begin{theorem}
Let $\{X_n, n \ge 1\}$ be a sequence of stationary associated random variables, such that  $P(|X_n| \le C_1) = 1$, for some $0 < C_1 < \infty$, for all $n \ge 1$. Under the conditions discussed in Remark $\ref{remark22}$ (k =3) and $H_0$
\begin{equation*}
HP_n =\frac{\sqrt{n}( N_n - \frac{1}{4} )}{3{{\sigma}}_{HP}} \xrightarrow {\mathcal{ L}} N(0, 1) \:\: \text{as}\: \:{n \to \infty},
\end{equation*}
where $\sigma^2_{HP} = Var(\rho_1(X_1)) + 2\sum_{j=1}^{\infty}Cov(\rho_1(X_1), \rho_1(X_{j+1}))$.
  \end{theorem}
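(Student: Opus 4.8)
The plan is to read off $N_n$ as a U-statistic of degree $k=3$ built on the symmetric kernel $\rho$, to observe that $\rho$ is bounded (it takes only the values $0$ and $\frac{1}{3}$) but is \emph{not} of bounded variation because of its jumps across the hyperplanes $x_i = x_j + x_\ell$, and then to apply Remark \ref{remark22} with $k=3$. Since the analytic machinery for bounded, non-BV kernels under association is already contained in Theorem \ref{theorem25} and its degree-$k$ extension in Remark \ref{remark22}, the proof reduces to verifying the remaining concrete hypotheses. Writing the Hoeffding decomposition $N_n = \theta + 3H_n^{(1)} + 3H_n^{(2)} + H_n^{(3)}$, the dominant term is the linear projection $3H_n^{(1)}$, a normalized sum of $\rho_1(X_i)$; the two items that still need to be pinned down are the centering constant $\theta$ under $H_0$ and the drift condition that controls this linear part.

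First I would evaluate $\theta = \int_{\mathbb{R}^3}\rho\, dF\,dF\,dF$ under $H_0$. Because at most one of three nonnegative numbers can exceed the sum of the other two, the three indicators defining $\rho$ are mutually exclusive, so by symmetry $\theta = P(X_1 > X_2 + X_3)$ for independent $X_1,X_2,X_3 \sim F$. For $F$ exponential a direct integration gives $\theta = \frac{1}{4}$, which fixes the centering and simultaneously identifies the projection $\rho_1(x) = E[\rho(x, X_2, X_3)]$ with the function displayed above (the second and third integrals being the equal probabilities $P(X_2 > X_3 + x)$ and $P(X_3 > X_2 + x)$, computed by conditioning on either variable).

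Next I would verify the drift condition $\rho_1 \ll \tilde{\rho}_1$. On the compact support $[0, C_1]$ the density of $F$ is bounded and Lipschitz, a regularity inherited from the marginal density hypotheses carried into Remark \ref{remark22}, so each integral term of $\rho_1$ is Lipschitz in $x$; hence $\rho_1(x) \ll Cx$ for a suitable $C > 0$, exactly as in the IFRA test above. Choosing $\tilde{\rho}_1(x) = Cx$ then yields
\[ \sum_{j=1}^\infty Cov(\tilde{\rho}_1(X_1), \tilde{\rho}_1(X_j)) = C^2 \sum_{j=1}^\infty Cov(X_1, X_j) < \infty \]
by the assumed covariance summability, so the drift condition of Remark \ref{remark22} is satisfied.

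With these checks in place, all hypotheses of Remark \ref{remark22} with $k=3$ hold: the kernel is bounded, $\sum_{j} Cov(X_1,X_j)^{1/27} < \infty$ (since $3(3+2k)=27$), $\sigma_1^2 < \infty$, $\sum_{j} |\sigma_{1j}| < \infty$, and $\sigma_{HP}^2 = \sigma_U^2 > 0$. Invoking the Remark then gives $Var(N_n) = 9\sigma_U^2/n + o(1/n)$ together with $\frac{\sqrt{n}(N_n - \frac{1}{4})}{3\sigma_{HP}} \xrightarrow{\mathcal{L}} N(0,1)$. I expect the only genuine obstacle to be the drift check of the third paragraph: the hard part, namely proving a central limit theorem for a bounded but non-BV kernel under association via smoothing and characteristic-function estimates, is already absorbed into Theorem \ref{theorem25} and Remark \ref{remark22}, so everything remaining here is either a short computation or a direct appeal to the assumed conditions.
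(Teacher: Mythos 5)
Your proposal is correct and follows the same route the paper takes: the paper offers no separate argument for this theorem beyond invoking Remark \ref{remark22} with $k=3$ for the bounded (discontinuous, non-BV) kernel, exactly as you do, with the Lipschitz property of $\rho_1$ supplying the drift condition just as in the IFRA case. Your explicit computation of $\theta = \tfrac14$ under $H_0$ and the verification that $\tilde{\rho}_1(x)=Cx$ works are details the paper leaves implicit, but they are accurate and add nothing that departs from its approach.
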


  \emph{Rejection criteria}: Since ${\sigma}_{HP}$ is unknown, we use the following test statistic
\begin{equation*}
\hat{HP}_n = \frac{\sqrt{n}( N_n - \frac{1}{4} )}{3{\hat{\sigma}}_{HP}},
\end{equation*}
where $\hat{\sigma}_{HP}$ is a consistent estimator for $\sigma_{HP}$. Reject $H_0$ at a significance level $\alpha$ if  $\hat{HP}_n$ $\le$ $z_{\alpha}$, $z_{\alpha} = -z_{(1 - \alpha)}$.

 \subsection{{Testing exponentiality against DMRL alternatives}}

 Assume the MRL function $\mu(x)$ is differential and $x\bar{F}^2(x) \to 0$ as $x \to \infty$. Our aim is to test 
\begin{equation*}
H_0: \mu(x) \text{ is constant}  \text{ (i.e $F$ is exponential), against}
\end{equation*}
\begin{equation*}
 H_3: \frac{d\mu{(x)}}{dx} \le 0 \text{ or } f(x)\int_x^{\infty}\bar{F}(u)du \le \bar{F}^2(x), x \ge 0.
\end{equation*}
A test by \cite{ahmad1992} rejects  $H_0$ in favor of $H_3$ for large values of $\delta_{F_n}$, where
\begin{equation}
\delta_{F_n} =  \frac{1}{{n \choose 2}} \sum_{1 \le i_1 < i_2  \le n} \rho(X_{i_1}, X_{i_2}),
\end{equation}
and $\rho(x_1, x_2) = \frac{1}{2}[\phi(x_1, x_2) + \phi(x_2, x_1)]$. Here,
 \[ \phi(x_1, x_2) = \left\{ 
  \begin{array}{l l}
    (3x_1 - x_2) & \quad \text{if $x_2> x_1,$}\\
    0 & \quad \text{otherwise.}  \end{array} \right. \]

When $\{X_j, 1\le j \le n\}$ are $i.i.d.$, the asymptotic distribution of $\delta_{F_n}$  can be obtained by the central limit theorem for U-statistics as discussed in \cite{MR0026294}. In particular, under $H_0$, we get,
\begin{equation}\label{cltahmad}
\frac{\sqrt{n} {\delta_{F_n}}}{\mu \sqrt{1/3}} \xrightarrow {\mathcal{ L}} N(0, 1) \:\: \text{as}\: \:{n \to \infty}. 
\end{equation} 
The kernel is of degree $2$. 
\begin{align}
\rho_1(x) & = \frac{1}{2} \Big( \int_x^{\infty}(3x-y)dF(y) +  \int_0^{x}(3y-x)dF(y) \Big)  = 2x\bar{F}(x) - \frac{x}{2} + \frac{3\mu}{2} - 2\int_x^{\infty}ydF(y).
  \end{align}
The statistic $\delta_{F_n}$ can be made scale invariant by considering ${\delta_{F_n}}/{\bar{X}_n}$ ($\bar{X}_n = \sum_{j=1}^n \frac{X_j}{n}$). The limiting distribution of ${\delta_{F_n}}/{\bar{X}_n}$ under $H_0$ follows using $(\ref{cltahmad})$ and the Slutsky's theorem, i.e
\begin{equation}
\frac{\sqrt{n} {\delta_{F_n}}}{\bar{X}_n \sqrt{1/3}} \xrightarrow {\mathcal{ L}} N(0, 1) \:\: \text{as}\: \:{n \to \infty}. 
\end{equation} 
 \begin{theorem}
Let $\{X_n, n \ge 1\}$ be a sequence of stationary associated random variables, such that  $P(|X_n| \le C_1) = 1$, for some $0 < C_1 < \infty$, for all $n \ge 1$. Assume that conditions of Theorem $\ref{theorem25}$ are satisfied. Then, under $H_0$,
\begin{equation*}
A_n = \frac{\sqrt{n}\delta_{F_n}}{2\sigma_A} \xrightarrow {\mathcal{ L}} N(0, 1) \:\: \text{as}\: \:{n \to \infty},
\end{equation*}
where $\sigma^2_A =  Var(\rho_1(X_1)) + 2\sum_{j=1}^{\infty}Cov(\rho_1(X_1), \rho_1(X_{j+1}))$.
  \end{theorem}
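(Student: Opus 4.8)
The plan is to verify the hypotheses of Theorem \ref{theorem25} with $k=2$ and then read off the conclusion, exactly as in the IFRA case; the only genuinely new point is the treatment of the first projection $\rho_1$, which here fails to be monotone. First I would record the structural facts about the kernel. The kernel $\rho(x_1,x_2)=\tfrac12[\phi(x_1,x_2)+\phi(x_2,x_1)]$ is symmetric and of degree $2$. Because $P(|X_n|\le C_1)=1$, on the support we have $|3x_1-x_2|\le 4C_1$, so $|\rho(x,y)|\le 4C_1<\infty$; thus $\rho$ is a bounded kernel and the boundedness requirement of Theorem \ref{theorem25} holds with $C_2=4C_1$. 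Under $H_0$ the distribution is exponential, and the population DMRL functional vanishes, so $\theta=\int\int\rho\,dF\,dF=0$ (a short computation using $E[\min(X_1,X_2)]=\mu/2$ and $E[\max(X_1,X_2)]=3\mu/2$ confirms this); this is consistent with the centering of the i.i.d.\ limit in (\ref{cltahmad}) and is why $A_n$ is centered at $0$ rather than at a nonzero constant. Conditions (i)--(iii) are assumed, so what remains is to produce a dominating function $\tilde\rho_1$ with $\rho_1\ll\tilde\rho_1$ and a summable covariance series.

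The main obstacle (and the point of departure from the IFRA argument) is that $\rho_1$ is not coordinate-wise monotone. Differentiating the closed form $\rho_1(x)=2x\bar F(x)-\tfrac{x}{2}+\tfrac{3\mu}{2}-2\int_x^\infty y\,dF(y)$ I get, after the $\pm\,2xf(x)$ terms cancel,
\begin{equation*}
\rho_1'(x)=2\bar F(x)-\tfrac12,
\end{equation*}
which is positive for $\bar F(x)>\tfrac14$ and negative for $\bar F(x)<\tfrac14$; hence $\rho_1$ increases and then decreases and cannot itself serve as $\tilde\rho_1$. What saves the argument is that $\rho_1$ is nonetheless Lipschitz: since $\bar F(x)\in[0,1]$ we have $\rho_1'(x)\in[-\tfrac12,\tfrac32]$, so $|\rho_1'(x)|\le\tfrac32$ for all $x$.

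Consequently I would take $\tilde\rho_1(x)=Cx$ for any constant $C\ge\tfrac32$. Then $(\tilde\rho_1\pm\rho_1)'(x)=C\pm\rho_1'(x)\ge C-\tfrac32\ge 0$, so both $\tilde\rho_1+\rho_1$ and $\tilde\rho_1-\rho_1$ are non-decreasing, i.e.\ $\rho_1\ll\tilde\rho_1$. For this choice $Cov(\tilde\rho_1(X_1),\tilde\rho_1(X_j))=C^2\,Cov(X_1,X_j)$, and since the $X_n$ are uniformly bounded the covariances are bounded by some $M<\infty$; writing $Cov(X_1,X_j)\le M^{20/21}Cov(X_1,X_j)^{1/21}$ gives
\begin{equation*}
\sum_{j=1}^{\infty}Cov(\tilde\rho_1(X_1),\tilde\rho_1(X_j))\le C^2 M^{20/21}\sum_{j=1}^{\infty}Cov(X_1,X_j)^{1/21}<\infty
\end{equation*}
by assumption (i) of Theorem \ref{theorem25}. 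With $\sigma_A^2=\sigma_U^2>0$, all hypotheses of Theorem \ref{theorem25} are met for $k=2$, so the variance expansion $Var(\delta_{F_n})=4\sigma_A^2/n+o(1/n)$ and the limit $\sqrt{n}\,\delta_{F_n}/(2\sigma_A)\xrightarrow{\mathcal L}N(0,1)$ follow immediately. I expect the verification of the Lipschitz bound (the cancellation yielding $\rho_1'(x)=2\bar F(x)-\tfrac12$) to be the one place deserving care; everything else is a direct invocation of Theorem \ref{theorem25}.
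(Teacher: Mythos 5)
Your proposal is correct and follows essentially the same route as the paper: the kernel is bounded on the (assumed bounded) support, and the key point is that $\rho_1$ is Lipschitz on $[-C_1,C_1]$, so one may take $\tilde\rho_1(x)=Cx$ with $C\ge 3/2$, reduce the covariance summability to assumption (i), and invoke Theorem \ref{theorem25} with $k=2$. Your explicit computation $\rho_1'(x)=2\bar F(x)-\tfrac12$ and the verification that $\theta=0$ under $H_0$ are correct and merely fill in details the paper leaves implicit.
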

    
  \emph{Rejection criteria}: Since ${\sigma}_A$ is not known, we use the following test statistic
\begin{equation*}
\hat{A}_n = \frac{\sqrt{n}\delta_{F_n}}{2{\hat{\sigma}}_A},
\end{equation*}
where $\hat{\sigma}_A$ is a consistent estimator for $\sigma_A$. Reject $H_0$ at a significance level $\alpha$ if $\hat{A}_n$ $\ge$ $z_{1-\alpha}$.

\begin{remark} The kernels of the test statistics discussed are discontinuous and not of local bounded variation. The existing results on U-statistics by \cite{Garg2015209, GargdewanSPL2} cannot be used to obtain the limiting distribution of the statistics discussed under the dependent setup.
 
\end{remark}
\begin{remark} The test statistics  $\hat{D}_{(n, b)}$, $\hat{HP}_n$, and $\hat{A}_n$ (under appropriate rejection criteria) can also be used for testing exponentiality against  DFRA, NWU and IMRL respectively.
\end{remark}
  \section{Simulations}\label{sim}

We assessed the performance of  IFRA, NBU and DMRL tests based on $\hat{D}_{(n, b)}$, $\hat{HP}_n$, and $\hat{A}_n$ when the underlying observations are  stationary and associated via simulations. We generated associated random variables using the property that non-decreasing functions of independent random variables are associated. We used the statistical software R (\cite{manR}) for our simulations. 

\begin{itemize}
\item[(1)] We investigated the asymptotic normality of the statistics under $H_0$. The marginal distribution of $X_j$ was taken as ${F}(x) = 1 - e^{-x}, x \ge 0$, $j \ge 1$, i.e we take $\mu = 1$ (the 3 tests discussed do not depend on the choice of $\mu$). The samples $\lbrace{X_j, 1 \le j \le n}\rbrace$ were generated as follows.

\begin{itemize}
\item [(S1)] $(m = 2)$ $X_j = min(X_j, X_{j+1})$, where $\lbrace {X_j, j \ge 1} \rbrace$ were pseudo-random numbers from $Exp(1/2)$ generated using $rexp$ function in R.
 \item[(S2)]  $(m = 3)$  $X_j =min( X_j, X_{j+1}, X_{j+2})$, where $\lbrace {X_j, j \ge 1} \rbrace$ were pseudo-random numbers from $Exp(1/3)$ generated using $rexp$ function in R.
  \item[(S3)]  $(m = 5)$  $X_j =min( X_j, X_{j+1}, \cdots, X_{j+4})$, where $\lbrace {X_j, j \ge 1} \rbrace$ were pseudo-random numbers from $Exp(1/5)$ generated using $rexp$ function in R.
 \item [(S4)]  $(m = 10)$ $X_j = min(X_j, \cdots, X_{j+9})$, where $\lbrace {X_j, j \ge 1} \rbrace$ were pseudo-random numbers from  $Exp(1/10)$ generated using $rexp$ function in R.
 \end{itemize}
 \item[(2)] We also calculated the empirical power of the above tests for the following alternatives. 
\begin{itemize}
\item[(a)] The marginal distribution of $X_j$ was taken as ${F}_1(x) = 1- e^{-(e^{xa} -1)/a}, x \ge 0$, $ a > 0$, $j \ge 1$.  We took  $a = 0.5, 0.8, 1$. The samples $\lbrace{X_j, 1 \le j \le n}\rbrace$ were generated as follows.
\begin{itemize}
\item[(S5)] $(m = 2)$  $X_j = log( 1 + a\times min(X_j, X_{j+1}))/a$,  where $\lbrace {X_j, j \ge 1} \rbrace$ were pseudo-random numbers from $Exp(1/2)$ generated using $rexp$ function in R.
 \item[(S6)]  $(m = 3)$  $X_j =  log( 1 + a \times min( X_j, X_{j+1}, X_{j+2}))/a$, where $\lbrace {X_j, j \ge 1} \rbrace$ were pseudo-random numbers from $Exp(1/3)$ generated using $rexp$ function in R.
  \item[(S7)]  $(m = 5)$  $X_j =  log( 1 + a \times min( X_j, X_{j+1}, \cdots, X_{j+4}))/a$, where $\lbrace {X_j, j \ge 1} \rbrace$ were pseudo-random numbers from $Exp(1/5)$ generated using $rexp$ function in R.
 \item [(S8)]  $(m = 10)$  $X_j =  log( 1 +a \times min(X_j, \cdots, X_{j+9}))/a$, where $\lbrace {X_j, j \ge 1} \rbrace$ were pseudo-random numbers from  $Exp(1/10)$ generated using $rexp$ function in R.
 \end{itemize}

\item[(b)] The marginal distribution of $X_j$ was taken as ${F}_2(x) = 1- e^{-x -\frac{x^{2}}{a}}, x \ge 0$, $a > 0$, $j \ge 1$.    We took  $a = 10, 5, 2$. The samples $\lbrace{X_j, 1 \le j \le n}\rbrace$ were generated as follows.

\begin{itemize}
\item[(S9)] $(m = 2)$ $X_j = min(X_j, \sqrt{a_1X_{j+1}} )$,  where $\lbrace {X_j, j \ge 1} \rbrace$ were pseudo-random numbers from $Exp(1)$ generated using $rexp$ function in R.
  \item[(S10)]  $(m = 3)$  $X_j =  min(X_j, \sqrt{a_2X_{j+1}}, X_{j+2} )$, where $\lbrace {X_j, j \ge 1} \rbrace$ were pseudo-random numbers from $Exp(1/2)$ generated using $rexp$ function in R.
\item[(S11)]  $(m = 5)$  $X_j = min(X_j, \sqrt{a_3X_{j+1}}, X_{j+2}, X_{j+3}, \sqrt{a_3X_{j+4}})
$, where $\lbrace {X_j, j \ge 1} \rbrace$ were pseudo-random numbers from $Exp(1/3)$ generated using $rexp$ function in R.
\item[(S12)] $(m = 10)$  $X_j = min(X_{j}, \sqrt{a_4X_{j+1}}, X_{j+2}, X_{j+3}, \sqrt{a_4X_{j+4}}, X_{j+5}, X_{j+6}, \sqrt{a_4X_{j+7}},\\ \sqrt{a_4X_{j+8}}, \sqrt{a_4X_{j+9}}\: )$, where $\lbrace {X_j, j \ge 1} \rbrace$ were pseudo-random numbers from $Exp(1/5)$ generated using $rexp$ function in R.
  \end{itemize}
  
     For $a = 10$, $a_1 = 10$, $a_2 = 5$, $a_3 = 20/3$, and $a_4 = 10$.
   For $a = 5$, $a_1 = 5$, $a_2 = 2.5$, $a_3 = 10/3$, and $a_4 = 5$. 
     For $a = 2$, $a_1 = 2$, $a_2 = 1$, $a_3 = 4/3$, and $a_4 = 2$. 
 \item[(c)] The marginal distribution of $X_j$ was taken as ${F}_3(x) = 1-  e^{-x^{a}}, x \ge 0$, $a > 0$, $j \ge 1$.  We took  $a = 1.1, 1.2, 1.3$. The samples $\lbrace{X_j, 1 \le j \le n}\rbrace$ were generated as follows.
\begin{itemize}
\item[(S13)] $(m = 2)$  $X_j = min(X_j, X_{j+1})^{1/a}$,  where $\lbrace {X_j, j \ge 1} \rbrace$ were pseudo-random numbers from $Exp(1/2)$ generated using $rexp$ function in R.
 \item[(S14)]  $(m = 3)$  $X_j =  min( X_j, X_{j+1}, X_{j+2})^{1/a}$, where $\lbrace {X_j, j \ge 1} \rbrace$ were pseudo-random numbers from $Exp(1/3)$ generated using $rexp$ function in R.
  \item[(S15)]  $(m = 5)$  $X_j =  min( X_j, X_{j+1}, \cdots, X_{j+4})^{1/a}$, where $\lbrace {X_j, j \ge 1} \rbrace$ were pseudo-random numbers from $Exp(1/5)$ generated using $rexp$ function in R.
 \item [(S16)]  $(m = 10)$  $X_j =  min(X_j, \cdots, X_{j+9})^{1/a}$, where $\lbrace {X_j, j \ge 1} \rbrace$ were pseudo-random numbers from  $Exp(1/10)$ generated using $rexp$ function in R.
\end{itemize}

\end{itemize}
 \item[(3)]  The results are based on $r = 10,000$ replications and {\boldmath $\alpha = 0.05$}. 
\item[(4)] We chose $b = 0.5$ for Deshpande's test.
\item [(5)] Estimation of $\sigma_D$/ $\sigma_{HP}$/ $\sigma_A$: For the estimation of $\sigma_D$, $\sigma_{HP}$ and $\sigma_A$, we did not directly use the estimator $B_n$ given in Lemma $\ref{lemma64}$ as in practical applications the distribution function of the underlying population $F$ will be unknown. We therefore obtained the following result to get another consistent estimator $\hat{B}_n$ for the standard deviations. The estimator is based on the empirical (histogram) distribution function of the underlying sample. Proof of the following is in section $\ref{proofs}$.

\begin{theorem}\label{theorem41}
 Let $F_n(x)$ is the empirical (histogram) distribution function for $\{X_j, 1 \le j \le n\}$, and  $P(|X_j| \le C_1) = 1$, for some $0 < C_1 < \infty$, $j \ge 1$. Let $\hat{B}_n$  be analogous to $B_n$ with $S_j (k)$ replaced by $\hat{S}_j(k) = \sum_{i = j+1}^{j+ k}\hat{\rho}_1(X_i)$, and $\bar{X}_n$ by $\bar{\hat{X}}_n = \sum_{i =1}^n{\hat{\rho}_1(X_i)}/n$, where
\begin{itemize}
\item[(i)] for Deshpande's test, $\hat{\rho}_1(x) = \frac{F_n(x/b) + 1 - F_n(xb)}{2}$. 
\item[(ii)] for Hollander and Proschan's test, $\hat{\rho}_1(x) =  \frac{1}{3}\Big( \sum_{i: X_i \le x} F_n(x-X_i)/n + \sum_{i=1}^n\bar{F}_n(x+X_i)/n + \sum_{i: X_1 \ge x} F_n(X_i-x)/n \Big)$.  
\item[(iii)] for Ahmad's test, $\hat{\rho}_1(x) = 2x\bar{F}_n(x) - \frac{x}{2} + \frac{3\bar{X}_n}{2} - 2 \frac{\sum_{i =1}^n X_i I(X_i > x)}{n}$. 
\end{itemize}
 Let
\begin{equation} \label{equation41}
\sum_{j=n+1}^{\infty}Cov(X_1, X_j) = O(n^{-(s-2)/2}), \text{ for some $s > 6$}.
\end{equation}
Then $(\ref{equation41})$ is  sufficient to prove $|B_n - \hat{B}_n|$ $\to 0$ a.s as $n \to \infty$ for the 3 tests discussed. 
 We denote the standard deviation estimator $\hat{B}_n$ obtained using the above theorem as $\hat{\sigma}_D$, $\hat{\sigma}_{HP}$, and $\hat{\sigma}_A$  for Deshpande's, Hollander and Proschan's and Ahmad's test respectively.
\end{theorem}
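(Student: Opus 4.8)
The plan is to exploit that $\hat{B}_n$ is obtained from $B_n$ simply by replacing the population projection $\rho_1$ with its empirical counterpart $\hat{\rho}_1$ (that is, $F$ by $F_n$) throughout, so that controlling $|B_n - \hat{B}_n|$ reduces to controlling the uniform deviation $\Delta_n := \sup_x |\hat{\rho}_1(x) - \rho_1(x)|$. Since the $X_i$ are bounded by $C_1$ and every distribution function takes values in $[0,1]$, both $\rho_1(X_i)$ and $\hat{\rho}_1(X_i)$ lie in a fixed bounded range; hence the block sums $S_j(k)$, $\hat{S}_j(k)$ and the centring means $\bar{X}_n$, $\bar{\hat{X}}_n$ entering the long-run-variance estimator $B_n$ of Lemma~\ref{lemma64} are uniformly $O(k)$ and $O(1)$. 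Writing the difference of the two block-sum estimators via the identity $a^2 - b^2 = (a-b)(a+b)$ on the centred block sums, each block contributes a small factor $|A_j - B_j| \le 2k\Delta_n$ against a bounded factor $|A_j + B_j| = O(k)$; summing over the $O(n)$ blocks and dividing by the normalisation of $B_n$ leaves a bound of the form $|B_n - \hat{B}_n| \le C\, k(n)\,\Delta_n$, where $k(n)$ is the bandwidth fixed in Lemma~\ref{lemma64}. It therefore suffices to prove $k(n)\,\Delta_n \to 0$ a.s.

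Next I would bound $\Delta_n$ by the uniform deviation of the empirical distribution function, handling the three kernels separately. For Deshpande's test, $\hat{\rho}_1(x) - \rho_1(x) = \tfrac12[(F_n - F)(x/b) - (F_n - F)(bx)]$, so $\Delta_n \le \sup_y |F_n(y) - F(y)| =: D_n$; for Hollander and Proschan's test the differences are differences of empirical convolution-type averages, which, using the bounded support, are again $O(D_n)$ uniformly in $x$; for Ahmad's test there is, besides an $O(D_n)$ term, the weighted term $\sup_x |\tfrac1n\sum_i X_i I(X_i>x) - \int_x^\infty y\,dF(y)|$, which after integration by parts (and because $|X_i| \le C_1$) is also $O(D_n)$. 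Thus in all three cases $\Delta_n \le C D_n$, and the problem is reduced to an almost-sure rate for $D_n = \sup_y|F_n(y) - F(y)|$.

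The core step is a Glivenko--Cantelli theorem with rate for the stationary associated sequence. I would first bound, for each fixed $x$, $Var(F_n(x)) \le \tfrac1n Var(I(X_1 \le x)) + \tfrac{2}{n}\sum_{j=1}^{n-1} Cov(I(X_1\le x), I(X_{1+j}\le x))$, and invoke the standard fractional-power covariance inequality for associated variables (converting $Cov(I(X_1\le x), I(X_{1+j}\le x))$ into a power of $Cov(X_1, X_{1+j})$, of the same type already used for assumption $(i)$ of Theorem~\ref{theorem25}), so that the tail of the covariance sum governs the rate. Condition~(\ref{equation41}), namely $\sum_{j>n} Cov(X_1, X_j) = O(n^{-(s-2)/2})$ with $s>6$, then yields a polynomial bound on a higher even moment $E|F_n(x) - F(x)|^{2m}$; a deterministic grid on the bounded interval $[-C_1, C_1]$, combined with monotonicity of $F_n$ and $F$ and a Borel--Cantelli argument along a suitable subsequence, upgrades this to an almost-sure rate $D_n = o(a_n)$. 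The requirement $s>6$ is exactly what makes the subsequence probabilities summable and the resulting rate fast enough that $k(n)\,a_n \to 0$ for the bandwidth of Lemma~\ref{lemma64}; combining with the reduction above gives $|B_n - \hat{B}_n| \le C\,k(n)\,D_n \to 0$ a.s.

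I expect the main obstacle to be precisely this final rate: obtaining an almost-sure polynomial rate for $\sup_y|F_n(y)-F(y)|$ under association that dominates the growing bandwidth $k(n)$. The covariance inequality trading the covariance of indicators for a fractional power of $Cov(X_1, X_j)$, and the integration-by-parts control of the weighted empirical term in Ahmad's statistic, are the supporting technical pieces, but the quantitative Glivenko--Cantelli estimate, together with the calibration of the exponent $(s-2)/2$ that it forces in condition~(\ref{equation41}), is where the real work lies.
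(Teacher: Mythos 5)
Your overall strategy coincides with the paper's: bound $|B_n-\hat B_n|$ by the uniform deviation $\Delta_n=\sup_x|\hat\rho_1(x)-\rho_1(x)|$, reduce $\Delta_n$ for each of the three kernels to $D_n=\sup_y|F_n(y)-F(y)|$ (the weighted empirical term in Ahmad's statistic being handled exactly as you suggest), and then invoke an almost-sure polynomial rate for $D_n$. Where you and the paper part ways is the last step: you propose to build the Glivenko--Cantelli rate from scratch (indicator-covariance inequalities, higher moments, a grid plus Borel--Cantelli), whereas the paper simply applies Lemma~\ref{roussaspaper}, already recorded among the auxiliary results, with norming factor $\psi_n=n^{1/4}$; condition (\ref{equation41}) with $s>6$ is precisely what makes $\sum_n n^{-s/2}\psi_n^{s+2}<\infty$ for that choice, giving $n^{1/4}D_n\to0$ a.s.\ with no further work. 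So the ``real work'' you identify at the end is in fact already packaged in a quoted lemma, and your plan to re-derive it, while not wrong in principle, is the part of your argument you have not actually carried out.

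There is, moreover, a concrete error in your reduction step. The estimator $B_n$ of Lemma~\ref{lemma64} is not a mean of squared block sums; it is
\begin{equation*}
B_n=\frac{1}{n-\ell+1}\sum_{j=0}^{n-\ell}\frac{|S_j(\ell)-\ell\bar Y_n|}{\sqrt{\ell}},
\end{equation*}
a mean of absolute values of $\sqrt{\ell}$-normalised centred block sums. The correct elementary bound is therefore $\bigl||a|-|b|\bigr|\le|a-b|$, which gives $|B_n-\hat B_n|\le 2\ell\Delta_n/\sqrt{\ell}=2\sqrt{\ell_n}\,\Delta_n$. Your $a^2-b^2$ bookkeeping instead produces the bound $C\ell_n\Delta_n$, and this does not close the argument: with the bandwidth $\ell_n=[n^{1/3}]$ fixed in the paper and the rate $\Delta_n=o(n^{-1/4})$ that $s>6$ delivers through Lemma~\ref{roussaspaper}, one gets $\ell_n\Delta_n=o(n^{1/3-1/4})=o(n^{1/12})$, which is not $o(1)$; to make $\ell_n\Delta_n\to0$ you would need $\Delta_n=o(n^{-1/3})$, and the summability condition of Lemma~\ref{roussaspaper} with $\psi_n=n^{1/3}$ forces $s>10$ rather than $s>6$. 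With the actual form of $B_n$ the relevant factor is $\sqrt{\ell_n}=n^{1/6}$, so $\sqrt{\ell_n}\Delta_n=o(n^{-1/12})\to0$ and $s>6$ (indeed $s>4$ would do for this step) suffices as claimed. Your concluding assertion that $s>6$ is ``exactly what makes \dots the resulting rate fast enough'' is therefore unsupported by your own chain of bounds; it becomes true only after correcting the form of the estimator.
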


We chose $\ell_n = [n^{1/3}]$, smallest integer less than or equal to $n^{1/3}$. Under the conditions assumed for obtaining the limiting distribution, $\rho_1$ is a lipshitz function for all the 3 tests discussed. In  Lemma $\ref{lemma64}$, $Y_i = \rho_1(X_i)$ and $\tilde{Y}_i = CX_i$, for some constant $C > 0$, for all $i \ge 1$. Under $\sum_{j=1}^{\infty} Cov(X_1, X_j) < \infty$, the estimator $B_n$ is consistent. Hence, from  condition $(\ref{equation41})$, $\hat{\sigma}_D$, $\hat{\sigma}_{HP}$, and $\hat{\sigma}_A$  are consistent estimators for $\sigma_D$, $\sigma_{HP}$ and $\sigma_A$ respectively.
\end{itemize}

\subsection{Simulation Results and Observations}

\begin{itemize}
 \item[(i)] \emph{Estimation of $\sigma_D$, $\sigma_{HP}$ and $\sigma_A$}: As discussed earlier, we used estimators  $\hat{\sigma}_D$, $\hat{\sigma}_{HP}$ and $\hat{\sigma}_A$ for simulations. For the sample generated from $Exp (1)$ ($\bar{F}(x) = e^{-x}$), using $(S1)$, $(S2)$, $(S3)$, and  $(S4)$, we analyzed the performance of the estimators by comparing them with the actual values $\sigma_D$, $\sigma_{HP}$ and $\sigma_A$ respectively. The simulation results given in Tables $4.1 (a) - (c)$ show that for a fixed $m$, as the sample size increases, the value of bias  and  the $E.M.S.E$ (Estimated M.S.E) of the estimator reduces. For $m =2, 3$, the convergence  is faster than for $m = 5, 10$, i.e a greater dependence leads to a slower convergence. 
   \item[(ii)] \emph{Asymptotic Normality}: From Table $4.2$, we observe that for a fixed $m$ as the sample size increases, the empirical size gets closer to $0.05$. For $m =10$,  larger sample sizes are  needed for a viable use of the asymptotic normality results than for $m=2, 3$.  The use of  estimators  for the standard deviations could also affect the convergence as the  bias and $E.M.S.E$ (Estimated M.S.E)  reduce much faster for $m = 2, 3$ than for $m = 5, 10$. 
\end{itemize}

\begin{center}
{\tiny\begin{tabular}{  p{.5in} p{.5in} p{.5in} p{.5in}| }
 \multicolumn{4}{c}{\textbf{ Table 4.1(a)}\: \emph{\textbf{Results for Deshpande's (D) test}}}\\
 \cline{1-4}

 \multicolumn{1}{|c|}{  (S1) (m=2), $2\sigma_D = 0.1778$} & {n=100}  &{n=200} & {n=500}   \\ \hline  
 \multicolumn{1}{|c|}{Bias = 2$|\sqrt{{\pi}/{2}}  \bar{\hat{\sigma}}_D - \sigma_D|$  }  & 0.0051 & 0.0047 & 0.0035 \\
\multicolumn{1}{|c|}{$2\sqrt{{\pi}/{2}}\bar{\hat{\sigma}}_D$  }   & 0.1727 & 0.1731 & 0.1743         \\
\multicolumn{1}{|c|}{E.M.S.E (2$\sqrt{{\pi}/{2}}\bar{\hat{\sigma}}_D$)}   &      0.0020 & 0.0013 & 0.0007      \\\hline
 \multicolumn{1}{|c|}{ (S2) (m=3), $2\sigma_D = 0.2155$} & {n=100}  &{n=200} & {n=500}   \\ \hline
 \multicolumn{1}{|c|}{Bias = 2$|\sqrt{{\pi}/{2}}  \bar{\hat{\sigma}}_D - \sigma_D|$  }   & 0.0078 &      0.0068  & 0.0049  \\
\multicolumn{1}{|c|}{$2\sqrt{{\pi}/{2}}\bar{\hat{\sigma}}_D$  }   &   0.2077 &      0.2087   & 0.2106        \\
\multicolumn{1}{|c|}{E.M.S.E (2$\sqrt{{\pi}/{2}}\bar{\hat{\sigma}}_D$)}    &   0.0033 &    0.0021  & 0.0011        \\ \hline
 \multicolumn{1}{|c|}{ (S3) (m=5), $2\sigma_D =  0.2767$}& {n=100}  &{n=200} & {n=500}  \\ \hline
 \multicolumn{1}{|c|}{Bias = 2$|\sqrt{{\pi}/{2}}  \bar{\hat{\sigma}}_D - \sigma_D|$  }   &  0.0133 & 0.0100 & 0.0072    \\
\multicolumn{1}{|c|}{$2\sqrt{{\pi}/{2}}\bar{\hat{\sigma}}_D$  }   &       0.2633 &  0.2667  & 0.2694       \\
\multicolumn{1}{|c|}{E.M.S.E (2$\sqrt{{\pi}/{2}}\bar{\hat{\sigma}}_D$)}   & 0.0067 &      0.0042  & 0.0022    \\\hline
 \multicolumn{1}{|c|}{ (S4) (m=10), $2\sigma_D =  0.3903$}& {n=100}  &{n=200} & {n=500} \\ \hline
 \multicolumn{1}{|c|}{Bias = 2$|\sqrt{{\pi}/{2}}  \bar{\hat{\sigma}}_D - \sigma_D|$  }    & 0.0440 &   0.0273          &  0.0164  \\
\multicolumn{1}{|c|}{$2\sqrt{{\pi}/{2}}\bar{\hat{\sigma}}_D$  }   &     0.3463 &    0.3631 &  0.3739       \\
\multicolumn{1}{|c|}{E.M.S.E (2$\sqrt{{\pi}/{2}}\bar{\hat{\sigma}}_D$)}   &   0.0158 &     0.0120 & 0.0064     \\\hline
 \end{tabular}}
 \end{center}

 \begin{center}
{\tiny\begin{tabular}{  p{.5in} p{.5in} p{.5in}p{.5in} | }
 \multicolumn{4}{c}{\textbf{ Table 4.1(b)}\: \emph{\textbf{Results for Hollander and Proschan's (HP) test}}}\\
 \cline{1-4}

 \multicolumn{1}{|c|}{  (S1) (m=2), $3\sigma_{HP}= 0.1438$} & {n=100}  &{n=200} & {n=500}    \\ \hline
  \multicolumn{1}{|c|}{Bias = 3$|\sqrt{{\pi}/{2}}  \bar{\hat{\sigma}}_{HP} - \sigma_{HP}|$  } &    0.0050 & 0.0043  &0.0031 \\
\multicolumn{1}{|c|}{$3\sqrt{{\pi}/{2}}\bar{\hat{\sigma}}_{HP}$  }   & 0.1388   & 0.1396 & 0.1407    \\
\multicolumn{1}{|c|}{E.M.S.E (3$\sqrt{{\pi}/{2}}\bar{\hat{\sigma}}_{HP}$)}  & 0.0014   & 0.0009 & 0.0005   \\ \hline

 \multicolumn{1}{|c|}{  (S2) (m=3), $3\sigma_{HP}=0.1741$} & {n=100}  &{n=200} & {n=500}  \\ \hline
  \multicolumn{1}{|c|}{Bias = 3$|\sqrt{{\pi}/{2}}  \bar{\hat{\sigma}}_{HP} - \sigma_{HP}|$  }  & 0.0074 & 0.0063 & 0.0044   \\
\multicolumn{1}{|c|}{$3\sqrt{{\pi}/{2}}\bar{\hat{\sigma}}_{HP}$  }   &  0.1667  & 0.1678 & 0.1697 \\\multicolumn{1}{|c|}{E.M.S.E (3$\sqrt{{\pi}/{2}}\bar{\hat{\sigma}}_{HP}$)}&  0.0022  & 0.0014 &  0.0008 \\ \hline

 \multicolumn{1}{|c|}{  (S3) (m=5), $3\sigma_{HP}= 0.2233$}& {n=100}  &{n=200} & {n=500}   \\ \hline
  \multicolumn{1}{|c|}{Bias = 3$|\sqrt{{\pi}/{2}}  \bar{\hat{\sigma}}_{HP} - \sigma_{HP}|$  } &  0.0122 & 0.0099 & 0.0068   \\
\multicolumn{1}{|c|}{$3\sqrt{{\pi}/{2}}\bar{\hat{\sigma}}_{HP}$  } & 0.2111   & 0.2134 & 0.2165    \\
\multicolumn{1}{|c|}{E.M.S.E (3$\sqrt{{\pi}/{2}}\bar{\hat{\sigma}}_{HP}$)}   &    0.0043 & 0.0028 & 0.0015  \\ \hline

 \multicolumn{1}{|c|}{  (S4) (m=10), $3\sigma_{HP}= 0.3150$} & {n=100}  &{n=200} & {n=500}  \\ \hline
  \multicolumn{1}{|c|}{Bias = 3$|\sqrt{{\pi}/{2}}  \bar{\hat{\sigma}}_{HP} - \sigma_{HP}|$  }  &  0.0354 & 0.0236 & 0.0144   \\
\multicolumn{1}{|c|}{$3\sqrt{{\pi}/{2}}\bar{\hat{\sigma}}_{HP}$  }  &  0.2796 & 0.2914 & 0.3006     \\
\multicolumn{1}{|c|}{E.M.S.E (3$\sqrt{{\pi}/{2}}\bar{\hat{\sigma}}_{HP}$)}   & 0.0097   & 0.0074 & 0.0041 \\ \hline
 \end{tabular}}
 \end{center}

 \begin{center}
{\tiny\begin{tabular}{  p{.5in} p{.5in} p{.5in} p{.5in}| }
 \multicolumn{4}{c}{\textbf{ Table 4.1(c)}\: \emph{\textbf{Results for Ahmad's (A) test}}}\\
 \cline{1-4}
 \multicolumn{1}{|c|}{  (S1) (m=2), $2\sigma_A = 0.7368$}  & {n=100}  &{n=200} & {n=500}   \\ \hline  
  \multicolumn{1}{|c|}{Bias = 2$|\sqrt{{\pi}/{2}}  \bar{\hat{\sigma}}_A - \sigma_A|$  } & 0.0492 & 0.0369 & 0.0243  \\
\multicolumn{1}{|c|}{$2\sqrt{{\pi}/{2}}\bar{\hat{\sigma}}_A$  } & 0.6876   &   0.6999       &    0.7125    \\
\multicolumn{1}{|c|}{E.M.S.E (2$\sqrt{{\pi}/{2}}\bar{\hat{\sigma}}_A$)}   & 0.0203  &    0.0145       &  0.0088  \\\hline
 \multicolumn{1}{|c|}{ (S2) (m=3), $2\sigma_A = 0.8803$} & {n=100}  &{n=200} & {n=500}   \\ \hline
 
  \multicolumn{1}{|c|}{Bias = 2$|\sqrt{{\pi}/{2}}  \bar{\hat{\sigma}}_A - \sigma_A|$  }   & 0.0671 & 0.0512 & 0.0341   \\
\multicolumn{1}{|c|}{$2\sqrt{{\pi}/{2}}\bar{\hat{\sigma}}_A$  }   & 0.8132 & 0.8291          &   0.8462   \\
\multicolumn{1}{|c|}{E.M.S.E (2$\sqrt{{\pi}/{2}}\bar{\hat{\sigma}}_A$)}   &  0.0293 &  0.0208     &     0.0128  \\ \hline

 \multicolumn{1}{|c|}{ (S3) (m=5), $2\sigma_A =  1.1209$}  & {n=100}  &{n=200} & {n=500}    \\ \hline
  \multicolumn{1}{|c|}{Bias = 2$|\sqrt{{\pi}/{2}}  \bar{\hat{\sigma}}_A - \sigma_A|$  }  & 0.1035  & 0.0798 & 0.0545  \\
\multicolumn{1}{|c|}{E.M.S.E (2$\sqrt{{\pi}/{2}}\bar{\hat{\sigma}}_A$)} &  1.0174 &   1.0411        & 1.0664       \\
\multicolumn{1}{|c|}{E.M.S.E (2$\sqrt{{\pi}/{2}}{\hat{B}}_n$)}& 0.0496  & 0.0352           &  0.0212  \\\hline
 \multicolumn{1}{|c|}{ (S4) (m=10), $2\sigma_A = 1.5757$}  & {n=100}  &{n=200} & {n=500}    \\ \hline
  \multicolumn{1}{|c|}{Bias = 2$|\sqrt{{\pi}/{2}}  \bar{\hat{\sigma}}_A - \sigma_A|$  }   &0.2673 & 0.1705 & 0.1069     \\
\multicolumn{1}{|c|}{$2\sqrt{{\pi}/{2}}\bar{\hat{\sigma}}_A$  }  &1.3084  & 1.4052 & 1.4687    \\
\multicolumn{1}{|c|}{E.M.S.E (2$\sqrt{{\pi}/{2}}\bar{\hat{\sigma}}_A$)}  & 0.1054 & 0.0702 & 0.0438  \\\hline
 \end{tabular}}
 \end{center}

 {\scriptsize\ In Tables $4.1 (a)\Big[ (b)\Big] \Big((c)\Big)$, $\bar{\hat{\sigma}}_D = \frac{1}{r}\sum_{i=1}^r \hat{\sigma}_D(i)$ $\Big[\bar{\hat{\sigma}}_{H.P} = \frac{1}{r}\sum_{i=1}^r \hat{\sigma}_{HP}(i)\Big]$ $\Big(\bar{\hat{\sigma}}_A = \frac{1}{r}\sum_{i=1}^r \hat{\sigma}_A(i)\Big)$. \\$E.M.S.E (\bar{\hat{\sigma}}_D) = \frac{1}{r -1} \sum_{i=1}^{r}(\hat{\sigma}_D(i) - \bar{\hat{\sigma}}_D)^2$ $\Big[E.M.S.E (\bar{\hat{\sigma}}_{HP}) = \frac{1}{r -1} \sum_{i=1}^{r}(\hat{\sigma}_{HP}(i) - \bar{\hat{\sigma}}_{HP})^2\Big]$ \\$\Big( E.M.S.E (\bar{\hat{\sigma}}_A) = \frac{1}{r -1} \sum_{i=1}^{r}(\hat{\sigma}_A(i) - \bar{\hat{\sigma}}_A)^2\Big)$, where $\hat{\sigma}_D(i) \Big[\hat{\sigma}_{HP}(i)\Big] \Big(\hat{\sigma}_A(i)\Big)$, $1 \le i \le r$, denote the estimated value for each sample for the $D \Big[HP\Big] \Big(A\Big)$ tests calculated using Comment $(5)$.}\\

\begin{center}
{\tiny\begin{tabular}{ p{.2.5in} p{1.2in}  p{1.2in}  p{1.2in}  | }
 \multicolumn{4}{c}{\textbf{ Table 4.2}\: \emph{\textbf{Simulation Results for $\bar{F}(x) = e^{-x}$  $\Big(D$ $\Big[HP\Big]$ $\Big( A\Big)$$\Big)$}}}\\
 \cline{1-4}

 \multicolumn{1}{|c|}{  (S1) (m=2)} &  \multicolumn{1}{|c}{n=100}  & \multicolumn{1}{c}{n=200} & \multicolumn{1}{c|}{n=500}   \\ \hline
\multicolumn{1}{|c|}{Sim. size}  & 0.0642 $\Big[0.0657\Big]$ $\Big(0.0761\Big)$ &  0.0601$\Big[ 0.0567\Big]$ $\Big(0.0662\Big)$  & 0.0553$\Big[0.0550\Big]$$\Big(0.0628\Big)$  \\
\multicolumn{1}{|c|}{Sim. critpt}  & 1.7991$\Big[-1.7874\Big]$$\Big(1.9226\Big)$ & 1.7530$\Big[-1.7251\Big]$$\Big(1.8066\Big)$ &  1.6995$\Big[-1.6914\Big]$$\Big(1.7756\Big)$\\ \hline

\multicolumn{1}{|c|}{Sim. size (if assumed $i.i.d.$) }  & 0.1488$\Big[0.1533\Big]$$\Big(0.1166\Big)$  &  0.1354$\Big[ 0.1383\Big]$$\Big(0.11\Big)$  & 0.1268$\Big[0.1269\Big]$$\Big(0.1041\Big)$) \\
\multicolumn{1}{|c|}{Sim. critpt (if assumed $i.i.d.$) } &2.6602$\Big[-2.6778\Big]$$\Big(2.2396\Big)$ & 2.5080$\Big[-2.4975\Big]$$\Big(2.1354\Big)$  & 2.3628$\Big[-2.4021\Big]$$\Big(2.1488\Big)$ \\ \hline

  \multicolumn{1}{|c|}{ (S2) (m=3)}& \multicolumn{1}{|c}{n=100}  & \multicolumn{1}{c}{n=200} & \multicolumn{1}{c|}{n=500}   \\ \hline
\multicolumn{1}{|c|}{Sim. size }  &  0.0791$\Big[0.0756\Big]$$\Big(0.0806\Big)$  & 0.0689$\Big[0.0661\Big]$$\Big(0.0725\Big)$   & 0.0595$\Big[0.058\Big]$$\Big(0.0639\Big)$  \\
\multicolumn{1}{|c|}{Sim. critpt }  & 1.8992$\Big[-1.8848\Big]$$\Big(1.9196\Big)$  & 1.7967$\Big[-1.7958\Big]$$\Big(1.8487\Big)$   &  1.7236$\Big[-1.7175\Big]$$\Big(1.7714\Big)$ \\ \hline

\multicolumn{1}{|c|}{Sim. size (if assumed $i.i.d.$) }  &  0.2198$\Big[0.2271\Big]$$\Big(0.1709\Big)$  & 0.1984$\Big[0.206\Big]$$\Big(0.16\Big)$  & 0.1853$\Big[0.1869\Big]$$\Big(0.1532\Big)$  \\
\multicolumn{1}{|c|}{Sim. critpt (if assumed $i.i.d.$) } & 3.3864$\Big[-3.4649\Big]$$\Big(2.7259\Big)$ & 3.1938$\Big[-3.1852\Big]$$\Big(2.6425\Big)$ & 2.9841$\Big[-3.0180\Big]$$\Big(2.5802\Big)$  \\ \hline

 \multicolumn{1}{|c|}{ (S3) (m=5)} & \multicolumn{1}{|c}{n=100}  & \multicolumn{1}{c}{n=200} & \multicolumn{1}{c|}{n=500}   \\ \hline

\multicolumn{1}{|c|}{Sim. size }  &   0.1046$\Big[0.1079\Big]$$\Big(0.0957\Big)$  & 0.0848$\Big[ 0.0862\Big]$$\Big(0.0803\Big)$  & 0.0672$\Big[0.0681\Big]$$\Big(0.0680\Big)$   \\
\multicolumn{1}{|c|}{Sim. critpt }  &  2.1295$\Big[-2.1216\Big]$$\Big(2.0860\Big)$  & 1.9417$\Big[-1.9394\Big]$$\Big(1.9259\Big)$ &  1.7870$\Big[-1.8009\Big]$$\Big(1.8079\Big)$ \\ \hline

\multicolumn{1}{|c|}{Sim. size (if assumed $i.i.d.$) }  & 0.3253$\Big[0.3470\Big]$$\Big(0.2608\Big)$ &  0.2877$\Big[0.3033\Big]$$\Big(0.2403\Big)$ & 0.2605$\Big[ 0.2709\Big]$$\Big(0.2247\Big)$ \\
\multicolumn{1}{|c|}{Sim. critpt (if assumed $i.i.d.$) } &4.7930$\Big[-4.8668\Big]$$\Big(3.5757\Big)$ & 4.4227$\Big[-4.5284\Big]$$\Big(3.4740\Big)$ & 4.0286$\Big[-4.1122\Big]$$\Big(3.3397\Big)$ \\ \hline
 \multicolumn{1}{|c|}{ (S4) (m=10)} & \multicolumn{1}{|c}{n=100}  & \multicolumn{1}{c}{n=200} & \multicolumn{1}{c|}{n=500}   \\ \hline
\multicolumn{1}{|c|}{Sim. size }  & 0.1756 $\Big[0.1804\Big]$$\Big(0.1538\Big)$ &  0.1207$\Big[0.1238\Big]$$\Big(0.1103\Big)$  & 0.0836$\Big[0.0817\Big]$$\Big(0.0826\Big)$ \\
\multicolumn{1}{|c|}{Sim. critpt }  & 2.7790$\Big[-2.6765\Big]$$\Big(2.6487\Big)$ & 2.2874$\Big[-2.1869\Big]$$\Big(2.1926\Big)$ &  1.9331$\Big[-1.9052\Big]$$\Big(1.9315\Big)$  \\ \hline

\multicolumn{1}{|c|}{Sim. size (if assumed $i.i.d.$)}  &  0.4786$\Big[0.5333\Big]$$\Big(0.4044\Big)$ & 0.4183$\Big[ 0.4685\Big]$$\Big(0.3572\Big)$  &  0.3662$\Big[0.3932\Big]$$\Big(0.3252\Big)$  \\
\multicolumn{1}{|c|}{Sim. critpt (if assumed $i.i.d.$)} &7.9272$\Big[-8.1142\Big]$$\Big(5.4121\Big)$  & 7.1790$\Big[-7.1295\Big]$$\Big(5.2703\Big)$ &  6.3136$\Big[-6.2565\Big]$$\Big(4.9758\Big)$ \\ \hline
 \end{tabular}}
\end{center}

{\scriptsize In Table $4.2$, Sim. critpt gives the simulated critical point, and Sim. size gives the simulated size of the test. The simulated critical point is the $95^{th}$ $\Big[5^{th}\Big]$ $\Big(95^{th}\Big)$ percentile of the generated $r = 10,000$ standardized statistic values.The 
standardized statistic values are given by, $\frac{\sqrt{n}(J_{(n, b)}(i) - 1/(b+1))}{2\hat{\sigma}_D(i)}$ for Despande's test (D),
$\Big[\frac{\sqrt{n}(N_n(i) - 1/4))}{3\hat{\sigma}_{HP}(i)} \text{ for  }$ \\ $\text{Hollander and Proschan's test}$ $\text{(HP)}\Big]$
$\Big(\frac{\sqrt{n}\delta_{F_n}(i)}{2\hat{\sigma}_A(i)} \text{ for Ahmad's test (A)}\Big)$, where $J_{(n, b)}(i)$ $\Big[N_n(i)\Big]$ $\Big(\delta_{F_n}(i)\Big)$ denote the sample statistic values,  and ${\hat{\sigma}}_D(i)$ $\Big[{\hat{\sigma}}_{HP}(i)\Big]$ $\Big({\hat{\sigma}}_A(i)\Big)$ denote the estimated value for each sample for the D$\Big[ HP\Big]$ $\Big(A\Big)$ tests for the $i^{th}$ replication, $1 \le i \le r$. The  simulated size of the test is the number of generated standardized statistic values greater$\Big[\text{less}\Big]$ $\Big(\text{greater}\Big)$ than $95^{th}$ $\Big[5^{th}\Big]$ $\Big(95^{th}\Big)$ percentile of the standard normal distribution given by $z_{0.95} \Big[z_{0.05}\Big] \Big(z_{0.95}\Big)$, where $z_{0.95} =1.644854$ and $z_{0.05} = -z_{0.95}$. The Sim. critpt (if assumed $i.i.d.$) and Sim. size (if assumed $i.i.d.$) is calculated  under the wrong assumption of independence for the same samples and is based on the standardized statistic values given by $(3.2)$ $\Big[(3.5)\Big]$$\Big((3.10)\Big)$. }

\begin{itemize}

 \item[(iii)] \emph{Effect of wrongly assuming the associated observations to be $i.i.d.$:} From Table $4.2$, we observe that wrongly assuming an associated sequence to be $i.i.d.$ leads to the estimated size of the test being farther away from 0.05, than in comparison with correctly considering the associated case. This is expected as the covariance terms are excluded under the false assumption of independence. For example, for Deshpande's test we observe that for a sample of size 500 and $m = 2$ wrongly considering the observations to be $i.i.d.$ leads to the simulated size of the test being 0.1268, much greater than the observed size of 0.0553 obtained under the correct assumption of association. This discrepancy can be observed more when $m = 10$.
 
 \item[(iv)] \emph{Power of the test:} From the following Tables $4.3 - 4.5$, we observe that the empirical power of the test is lower in the case when the sample, under $H_1$,  is generated from a distribution which is closer to $F(x)$. For example, in Table $4.3$, $F_1(x) = 1- e^{-(e^{ax} -1)/a}$ is closer to $F(x)$ when $a = 0.5$ than when $a = 1$ and hence, the power of the test increases as $a$ moves closer to 1. The empirical power increases as the sample size increases. In general, for the same sample size, a greater order of dependence leads to a reduction in the power of the test.

 \begin{center}
{\tiny\begin{tabular}{ p{.3in} p{1in}  p{1in}  p{1in}| }
 \multicolumn{4}{c}{\textbf{ Table 4.3}\: \emph{\textbf{Simulation Results for power $\bar{F}_1(x) = e^{-(e^{ax} -1)/a}$ $\Big(D$ $\Big[HP\Big]$ $\Big( A\Big)$$\Big)$ }}}\\
 \cline{1-4}
 \multicolumn{1}{|c|}{ (S5) (m=2)} &\multicolumn{1}{|c}{n=100}  & \multicolumn{1}{c}{n=200} & \multicolumn{1}{c|}{n=500}   \\ \hline
\multicolumn{1}{|c|}{ $a = 0.5$} & 0.4477$\Big[0.4723\Big]$$\Big(0.8948\Big)$ &0.6587$\Big[0.6944\Big]$$\Big(0.9839\Big)$ & 0.9432$\Big[0.9601\Big]$$\Big(0.9999\Big)$ \\
\multicolumn{1}{|c|}{ $a = 0.8$}& 0.6402$\Big[0.6786\Big]$$\Big(0.9719\Big)$ & 0.8775$\Big[0.9027\Big]$$\Big(0.9987\Big)$ & 0.9972$\Big[0.9990\Big]$$\Big(1\Big)$\\
\multicolumn{1}{|c|}{ $a = 1$}&    0.7421$\Big[0.7700\Big]$$\Big(0.9880\Big)$ & 0.9373$\Big[0.9560\Big]$$\Big(1\Big)$ &  0.9997$\Big[0.9999\Big]$$\Big(1\Big)$\\ \hline
 \multicolumn{1}{|c|}{ (S6) (m=3)}&\multicolumn{1}{|c}{n=100}  & \multicolumn{1}{c}{n=200} & \multicolumn{1}{c|}{n=500}   \\ \hline
\multicolumn{1}{|c|}{ $a = 0.5$} & 0.3747$\Big[0.3999\Big]$$\Big(0.8344\Big)$  & 0.5397$\Big[0.5739\Big]$$\Big(0.9475\Big)$  & 0.8572$\Big[0.8879\Big]$$\Big(0.9988\Big)$ \\
\multicolumn{1}{|c|}{ $a = 0.8$}&  0.5362$\Big[0.5696\Big]$$\Big(0.9308\Big)$ & 0.7608$\Big[0.7970\Big]$$\Big(0.9922\Big)$ & 0.9786$\Big[0.9874\Big]$$\Big(1\Big)$\\
\multicolumn{1}{|c|}{ $a = 1$}&  0.6232$\Big[0.6620\Big]$$\Big(0.9603\Big)$ & 0.8513$\Big[0.8841\Big]$$\Big(0.998\Big)$ & 0.9950$\Big[0.9975\Big]$$\Big(1\Big)$ \\ \hline
 \multicolumn{1}{|c|}{  (S7) (m=5)} & \multicolumn{1}{|c}{n=100}  & \multicolumn{1}{c}{n=200} & \multicolumn{1}{c|}{n=500}   \\ \hline
\multicolumn{1}{|c|}{ $a = 0.5$} & 0.3286$\Big[0.3527\Big]$$\Big(0.7448\Big)$   & 0.4196$\Big[0.4548\Big]$$\Big(0.8749\Big)$ &  0.6951$\Big[0.7392\Big]$$\Big(0.9872\Big)$ \\
\multicolumn{1}{|c|}{ $a = 0.8$}& 0.4442$\Big[0.4785\Big]$$\Big(0.8587\Big)$  & 0.5968$\Big[0.6428\Big]$$\Big(0.9606\Big)$ & 0.9005$\Big[0.9291\Big]$$\Big(0.9994\Big)$\\
\multicolumn{1}{|c|}{ $a = 1$}&  0.5076$\Big[0.5463\Big]$$\Big(0.8973\Big)$ & 0.6917$\Big[0.7388\Big]$$\Big(0.9818\Big)$ & 0.9574$\Big[0.9717\Big]$$\Big(0.9999\Big)$ \\ \hline
 \multicolumn{1}{|c|}{  (S8) (m=10)} & \multicolumn{1}{|c}{n=100}  & \multicolumn{1}{c}{n=200} & \multicolumn{1}{c|}{n=500}   \\ \hline
\multicolumn{1}{|c|}{ $a = 0.5$} &0.3549$\Big[0.3826\Big]$$\Big(0.6716\Big)$   &  0.3544$\Big[0.3887\Big]$$\Big(0.7631\Big)$ & 0.4939$\Big[0.5349\Big]$$\Big(0.9247\Big)$\\
\multicolumn{1}{|c|}{ $a = 0.8$}& 0.4360$\Big[0.4662\Big]$$\Big(0.7604\Big)$  & 0.4762$\Big[0.5224\Big]$$\Big(0.8697\Big)$  &  0.7000$\Big[0.7504\Big]$$\Big(0.9836\Big)$\\
\multicolumn{1}{|c|}{ $a = 1$}&  0.4746$\Big[0.5118\Big]$$\Big(0.8006\Big)$ & 0.5415$\Big[0.5857\Big]$$\Big(0.9104\Big)$ & 0.7907$\Big[0.8359\Big]$$\Big(0.9941\Big)$ \\ \hline
 \end{tabular}}
 \end{center}

   \begin{center}
{\tiny\begin{tabular}{ p{.3in} p{1in}  p{1in}  p{1in}| }
 \multicolumn{4}{c}{\textbf{ Table 4.4}\: \emph{\textbf{Simulation Results for power $\bar{F}_2(x) = e^{-x -\frac{x^{2}}{a}}$$\Big(D$ $\Big[HP\Big]$ $\Big( A\Big)$$\Big)$}}}\\
 \cline{1-4}
  \multicolumn{1}{|c|}{ (S9) (m=2)} & \multicolumn{1}{|c}{n=100}  & \multicolumn{1}{c}{n=200} & \multicolumn{1}{c|}{n=500}   \\ \hline
\multicolumn{1}{|c|}{ $a = 10$}   &0.2255$\Big[0.2295\Big]$$\Big(0.6650\Big)$  & 0.3410$\Big[0.3459\Big]$$\Big(0.8175\Big)$ &  0.6162$\Big[0.6389\Big]$$\Big(0.9686\Big)$ \\ 
\multicolumn{1}{|c|}{ $a = 5$}    & 0.3964$\Big[0.4025\Big]$$\Big(0.8369\Big)$ & 0.6022$\Big[0.6181\Big]$$\Big(0.9584\Big)$  &  0.9152$\Big[0.9265\Big]$$\Big(0.9995\Big)$  \\
\multicolumn{1}{|c|}{ $a = 2$}  &  0.6511$\Big[0.6655\Big]$$\Big(0.9655\Big)$  & 0.8900$\Big[0.9023\Big]$$\Big(0.9984\Big)$  &   0.9986$\Big[0.9991\Big]$$\Big(1\Big)$ \\ \hline 
 \multicolumn{1}{|c|}{ (S10) (m=3)}&\multicolumn{1}{|c}{n=100}  & \multicolumn{1}{c}{n=200} & \multicolumn{1}{c|}{n=500}   \\ \hline
 \multicolumn{1}{|c|}{ $a = 10$}   & 0.1936$\Big[0.1922\Big]$$\Big(0.5779\Big)$   & 0.2548$\Big[0.2599\Big]$$\Big(0.6926\Big)$  &   0.4188$\Big[0.4368\Big]$$\Big(0.8888\Big)$  \\ 
\multicolumn{1}{|c|}{ $a = 5$}   &  0.2902$\Big[0.2909\Big]$$\Big(0.7300\Big)$ & 0.4145$\Big[0.4232\Big]$$\Big(0.8675\Big)$ & 0.7099$\Big[0.7356\Big]$$\Big(0.9828\Big)$   \\
\multicolumn{1}{|c|}{ $a = 2$}  &    0.4730$\Big[0.4916\Big]$$\Big(0.8944\Big)$ &  0.6976$\Big[0.7194\Big]$$\Big(0.9806\Big)$ &  0.9612$\Big[0.9694\Big]$$\Big(0.9999\Big)$  \\ \hline
 \multicolumn{1}{|c|}{  (S11) (m=5)} & \multicolumn{1}{|c}{n=100}  & \multicolumn{1}{c}{n=200} & \multicolumn{1}{c|}{n=500}   \\ \hline
\multicolumn{1}{|c|}{ $a = 10$}   &  0.2008$\Big[0.1999\Big]$$\Big(0.5501\Big)$ & 0.2350$\Big[0.2358\Big]$$\Big(0.6416\Big)$  & 0.3538$\Big[0.3688\Big]$$\Big(0.8319\Big)$   \\ 
\multicolumn{1}{|c|}{ $a = 5$}   &  0.2727$\Big[0.2721\Big]$$\Big( 0.6709\Big)$  & 0.3558$\Big[0.3631\Big]$$\Big(0.7961\Big)$  & 0.5883$\Big[0.6119\Big]$$\Big(0.9549\Big)$   \\
\multicolumn{1}{|c|}{ $a = 2$}  &  0.4001$\Big[0.4173\Big]$$\Big(0.8286\Big)$ & 0.5640$\Big[0.5845\Big]$$\Big(0.9416\Big)$   & 0.8799$\Big[0.9011\Big]$$\Big( 0.9982\Big)$   \\ \hline
 \multicolumn{1}{|c|}{ (S12) (m=10)} & \multicolumn{1}{|c}{n=100}  & \multicolumn{1}{c}{n=200} & \multicolumn{1}{c|}{n=500}   \\ \hline
\multicolumn{1}{|c|}{ $a = 10$}    & 0.2354$\Big[0.2431\Big]$$\Big(0.5390\Big)$  & 0.2376$\Big[0.2450\Big]$$\Big(0.5880\Big)$  & 0.3027$\Big[0.3198\Big]$$\Big(0.7513\Big)$ \\ 
\multicolumn{1}{|c|}{ $a = 5$}   &  0.3148$\Big[0.3217\Big]$$\Big(0.6381\Big)$  & 0.3395$\Big[0.3476\Big]$$\Big(0.7268\Big)$  & 0.4856$\Big[0.5078\Big]$$\Big(0.8948\Big)$   \\
\multicolumn{1}{|c|}{ $a = 2$}  & 0.4058$\Big[0.4221\Big]$$\Big(0.7531\Big)$  & 0.4791$\Big[0.4978\Big]$$\Big(0.8694\Big)$  &  0.7313$\Big[0.7542\Big]$$\Big(0.9835\Big)$  \\ \hline
 \end{tabular}}
 \end{center}

  \begin{center}
{\tiny\begin{tabular}{ p{.3in} p{1in}  p{1in}  p{1in}| }
 \multicolumn{4}{c}{\textbf{ Table 4.5}\: \emph{\textbf{Simulation Results for power  $\bar{F}_3(x) = e^{-x^{a}}$ $\Big(D$ $\Big[HP\Big]$ $\Big( A\Big)$$\Big)$)}}}\\
 \cline{1-4}
  \multicolumn{1}{|c|}{ (S13) (m=2)}  & \multicolumn{1}{|c}{n=100}  & \multicolumn{1}{c}{n=200} & \multicolumn{1}{c|}{n=500}   \\ \hline
\multicolumn{1}{|c|}{ $a = {1.1}$}   &0.2352$\Big[0.2366\Big]$$\Big(0.5847\Big)$  & 0.3287$\Big[0.3285\Big]$$\Big(0.7053\Big)$  &  0.5927$\Big[0.5898\Big]$$\Big(0.8929\Big)$  \\ 
\multicolumn{1}{|c|}{ $a = 1.2$}  &  0.4984$\Big[0.4977\Big]$$\Big(0.8467\Big)$ & 0.7227$\Big[0.7289\Big]$$\Big(0.9569\Big)$ & 0.9697$\Big[0.9708\Big]$$\Big(0.9994\Big)$ \\
\multicolumn{1}{|c|}{ $a = 1.3$} &    0.7379$\Big[0.7462\Big]$$\Big(0.9660\Big)$ & 0.9403$\Big[0.9467\Big]$$\Big(0.9981\Big)$  & 0.9998$\Big[0.9998\Big]$$\Big(1\Big)$\\ \hline
  \multicolumn{1}{|c|}{ (S14) (m=3)} & \multicolumn{1}{|c}{n=100}  & \multicolumn{1}{c}{n=200} & \multicolumn{1}{c|}{n=500}   \\ \hline
\multicolumn{1}{|c|}{ $a = {1.1}$}    & 0.2163$\Big[0.2187\Big]$$\Big(0.5449\Big)$    &  0.2771$\Big[0.2827\Big]$$\Big(0.6421\Big)$   &  0.4696$\Big[0.4763\Big]$$\Big(0.8312\Big)$\\ 
\multicolumn{1}{|c|}{ $a = 1.2$}  &  0.4175$\Big[0.4763\Big]$$\Big(0.7925\Big)$ & 0.5977$\Big[0.6116\Big]$$\Big(0.9125\Big)$  &   0.9050$\Big[0.9142\Big]$$\Big(0.9939\Big)$\\
\multicolumn{1}{|c|}{ $a = 1.3$} &   0.6236$\Big[0.6429\Big]$$\Big(0.9264\Big)$ & 0.8546$\Big[0.8666\Big]$$\Big(0.9887\Big)$  & 0.9959$\Big[0.9962\Big]$$\Big(0.9999\Big)$ \\ \hline
  \multicolumn{1}{|c|}{ (S15) (m=5)} & \multicolumn{1}{|c}{n=100}  & \multicolumn{1}{c}{n=200} & \multicolumn{1}{c|}{n=500}   \\ \hline
\multicolumn{1}{|c|}{ $a = {1.1}$}   & 0.2168$\Big[0.2255\Big]$$\Big(0.5031\Big)$ &  0.2391$\Big[0.2491\Big]$$\Big(0.5690\Big)$ & 0.3546$\Big[0.3674\Big]$$\Big(0.7348\Big)$  \\ 
\multicolumn{1}{|c|}{ $a = 1.2$}  &   0.3585$\Big[0.3765\Big]$$\Big(0.7111\Big)$ & 0.4653$\Big[0.4902\Big]$$\Big(0.8333\Big)$  & 0.7649$\Big[0.7768\Big]$$\Big(0.9707\Big)$   \\
\multicolumn{1}{|c|}{ $a = 1.3$} &  0.5144$\Big[0.5405\Big]$$\Big(0.8609\Big)$  &  0.7007$\Big[0.7246\Big]$$\Big(0.9551\Big)$  & 0.9608$\Big[0.9673\Big]$$\Big( 0.9990\Big)$\\ \hline
  \multicolumn{1}{|c|}{ (S16) (m=10)} & \multicolumn{1}{|c}{n=100}  & \multicolumn{1}{c}{n=200} & \multicolumn{1}{c|}{n=500}   \\ \hline
\multicolumn{1}{|c|}{ $a = {1.1}$}  & 0.2672$\Big[0.2851\Big]$$\Big(0.5158\Big)$  &0.2359$\Big[0.2472\Big]$$\Big(0.5229\Big)$  & 0.2693$\Big[0.2843\Big]$$\Big(0.6247\Big)$  \\ 
\multicolumn{1}{|c|}{ $a = 1.2$}  & 0.3722$\Big[0.3992\Big]$$\Big(0.6588\Big)$  & 0.3912$\Big[0.4157\Big]$$\Big(0.7325\Big)$   &  0.5517$\Big[0.5769\Big]$$\Big(0.8864\Big)$  \\
\multicolumn{1}{|c|}{ $a = 1.3$} & 0.4759$\Big[0.5110\Big]$$\Big(0.7759\Big)$  &0.5478$\Big[0.5834\Big]$$\Big(0.8715\Big)$  & 0.8024$\Big[0.8246\Big]$$\Big(0.9807\Big)$  \\ \hline

 \end{tabular}}
\end{center}

{\scriptsize\ In Tables $4.3 - 4.5$, Sim. power $= \frac{N}{r}$, where\\
 $N$ $=$ $\#$ $\lbrace{ i: \frac{\sqrt{n}(U_n(i) - 1/(b+1))}{2\hat{\sigma}_D(i)} \ge z_{0.95}}\rbrace$ for Despande's test (D),\\
 $N$ $=$ $\#$ $\lbrace{ i: \frac{\sqrt{n}(U_n(i) - 1/4)}{3\hat{\sigma}_{HP}(i)} \le -z_{0.95}}\rbrace$ for Hollander and Proschan's test (H.P.),\\
 $N$ $=$ $\#$ $\lbrace{ i: \frac{\sqrt{n}U_n(i) }{2\hat{\sigma}_A(i)} \ge z_{0.95}}\rbrace$ for Ahmad's test (A).}

 \item[(iv)] \emph{Comparison with the $i.i.d$ setup}: A comparison of the simulation results for the statistics done under the $i.i.d.$ setup, indicate that relatively larger sample sizes are needed for applying the asymptotic normality results under the dependent setup.  
\end{itemize}

\section{Discussion}\label{dis}

In this paper, we have discussed the limiting properties of tests by \cite{DESHPANDE01081983}, \cite{hollander1972testing} and \cite{ahmad1992} for testing exponentiality against IFRA, NBU and DMRL respectively, when the underlying random variables are stationary and associated.  Simulation results indicate that in comparison with the $i.i.d.$ setup relatively larger sample sizes are needed  for  use of normal distribution approximation. 

 Apart from the test statistics considered, the limiting distribution of other U-statistics with the discussed type of kernel can be obtained under the conditions of Theorem $\ref{theorem25}$. This paper also adds to the existing literature on U-statistics based on associated random variables. 
 
 The tests discussed above cannot be used to test $F \in \mathbb{F}$ against the alternative $F \notin \mathbb{F}$, where $\mathbb{F}$ is a family of distributions with some ageing property (IFRA, NBU, DMRL etc.). Recently, many authors have  proposed tests for membership of the proposed class (i.e $F \in \mathbb{F}$) against the alternative of non-membership of that class (i.e $F \notin \mathbb{F}$). For examples, see \cite{10.2307/3448683}, \cite{4585402} and \cite{srivastava2012testing}. Their tests  are for $i.i.d.$ setup. Extension of their results to the case when the underlying observations are associated are being looked into.

\section{Proofs} \label{proofs}
\subsection{Auxiliary Results}
\setlength\parindent{24pt}
     In this section we give results and definitions which will be needed to  prove our main results.

\begin{lemma}\label{lemma61}(\cite{MR789244})  Let $\left\lbrace X_n, n \ge 1\right\rbrace$  be a stationary sequence of associated random variables, with  $E(X_1^2) < \infty$. Then,
\begin{equation}
|\phi - \prod_{j=1}^{n} \phi_j| \le 2\sum_{1 \le k < l \le n} | r_k|| r_l| Cov (X_k, X_l ),
\end{equation}
where $\phi = E(exp(i \sum_{j=1}^n r_j X_j))$ and $\phi_j= E(exp(i r_j X_j))$, $j = 1, \cdots, n$  are joint and marginal characteristic functions, respectively.
\end{lemma}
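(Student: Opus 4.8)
The plan is to combine a telescoping decomposition of $\phi - \prod_{j=1}^n \phi_j$ with a Newman-type covariance bound for smooth functions of associated random variables. The essential analytic input is the estimate that, for associated $X_k, X_l$ and complex exponentials (each Lipschitz, with $\big|\tfrac{d}{dx}e^{isx}\big| = |s|$), the quantity $|Cov(e^{isX_k}, e^{itX_l})|$ is controlled by $|s|\,|t|\,Cov(X_k, X_l)$ up to a universal constant. I would derive this from the Hoeffding covariance identity $Cov(f(X_k), g(X_l)) = \int\int [F_{X_k, X_l}(x,y) - F_{X_k}(x)F_{X_l}(y)]\, df(x)\, dg(y)$ together with the fact that association makes the integrand $H(x,y) = F_{X_k,X_l}(x,y) - F_{X_k}(x)F_{X_l}(y)$ nonnegative with $\int\int H = Cov(X_k, X_l)$; bounding the differentials by $|df|\le |s|\,dx$ and $|dg|\le |t|\,dy$ then yields the estimate, and its multivariate version (one coordinate against a block of coordinates) follows identically.

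First I would introduce, for $k = 0, 1, \ldots, n$, the hybrid quantities
\[
A_k = \Big(\prod_{j=1}^k \phi_j\Big)\, E\Big(\exp\big(i\textstyle\sum_{j=k+1}^n r_j X_j\big)\Big),
\]
so that $A_0 = \phi$ and $A_n = \prod_{j=1}^n \phi_j$ (empty products being $1$), and write the target as the telescoping sum $\phi - \prod_{j=1}^n \phi_j = \sum_{k=0}^{n-1}(A_k - A_{k+1})$. Factoring out $\prod_{j=1}^k \phi_j$ and peeling off the $(k{+}1)$-th exponential, the bracketed factor in each increment is exactly $Cov\big(e^{ir_{k+1}X_{k+1}}, e^{i\sum_{j=k+2}^n r_j X_j}\big)$. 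Since every marginal characteristic function has modulus at most one, $\big|\prod_{j=1}^k \phi_j\big| \le 1$, so the multivariate covariance bound gives $|A_k - A_{k+1}| \le \sum_{l=k+2}^n |r_{k+1}|\,|r_l|\,Cov(X_{k+1}, X_l)$.

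Summing over $k$ and reindexing $k \mapsto k+1$ collapses the double sum to $\sum_{1 \le k < l \le n} |r_k|\,|r_l|\,Cov(X_k, X_l)$, which, carrying the constant from the complex covariance estimate, produces the stated bound with the factor $2$. The main obstacle is precisely this covariance inequality for complex exponentials: association supplies a covariance bound only for coordinatewise monotone (real) functions, so $e^{isx}$ must be handled either through the Hoeffding identity above or, equivalently, by decomposing into $\cos$ and $\sin$ and controlling the four resulting real covariances $Cov(\cos sX_k, \cos tX_l)$, $Cov(\sin sX_k, \sin tX_l)$, and the two cross terms. It is this complex-to-real passage, rather than the telescoping, that carries the analytic weight and is responsible for the numerical constant $2$ appearing in the statement.
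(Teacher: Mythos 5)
First, a point of reference: the paper does not prove this lemma at all --- it is quoted as an auxiliary result from Newman (1984) (the same reference \cite{MR789244} from which the paper takes the relation $\ll$), so your proposal is really being measured against the classical argument rather than anything in the text. Your skeleton is exactly Newman's and is correct as far as it goes: the hybrid quantities $A_k$, the identity $A_k - A_{k+1} = \bigl(\prod_{j\le k}\phi_j\bigr)\,Cov\bigl(e^{ir_{k+1}X_{k+1}}, e^{i\sum_{j\ge k+2} r_j X_j}\bigr)$, the bound $\bigl|\prod_{j\le k}\phi_j\bigr|\le 1$, and the reindexing that collapses the double sum are all fine.

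The gap is in the step you describe as following ``identically'': the covariance bound for one exponential against a block. The bivariate Hoeffding identity $Cov(f(X),g(Y))=\int\!\!\int f'(x)g'(y)H(x,y)\,dx\,dy$ works because $f'(x)g'(y)$ is a deterministic kernel that can be bounded in sup-norm against the nonnegative $H$; when $g$ depends on several coordinates there is no analogous identity --- in the symmetrized representation $2\,Cov(f(X),g(\mathbf{Y}))=E[(f(X)-f(X'))(g(\mathbf{Y})-g(\mathbf{Y}'))]$ the partial derivatives of $g$ get evaluated at random interpolation points and cannot be separated from the indicator differences, so the positivity that drives the bivariate proof is lost. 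The device that actually handles the block step is the domination relation $\ll$ that the paper introduces in Section 2 (from the same Newman reference): one checks $e^{irx}\ll |r|x$ in the complex sense that $|r|x+\mathrm{Re}(e^{i\theta}e^{irx})$ is nondecreasing for every $\theta$, proves $|Cov(f,g)|\le Cov(\tilde f,\tilde g)$ for real coordinatewise functions with $f\ll\tilde f$, $g\ll\tilde g$ via the four association inequalities for $\tilde f\pm f$ and $\tilde g\pm g$, and then passes to complex $f,g$ by writing $Cov(f,g)=Re^{i\alpha}$ and bounding $R=Cov(\mathrm{Re}(e^{-i\alpha}f),\mathrm{Re}\,g)-Cov(\mathrm{Im}(e^{-i\alpha}f),\mathrm{Im}\,g)$ by two applications of the real inequality. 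That rotation is also what produces the stated constant: it gives exactly $2\,Cov(\tilde f,\tilde g)$ per increment, whereas your fallback of separately controlling the four real covariances $Cov(\cos,\cos)$, $Cov(\sin,\sin)$ and the two cross terms yields $4$ (or $2\sqrt{2}$ if you estimate the modulus as $\sqrt{a^2+b^2}$), not $2$. So the telescoping is right, but the analytic core needs the $\ll$ machinery rather than the Hoeffding identity, and the constant does not come out of your version of the complex-to-real passage.
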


\begin{lemma} \label{lemma62} (\cite{Roussas2001397}) Let ${X} = (X_1, ..., X_k)$ and ${X'} = (X'_1, ..., X'_k)$ be two $k$-dimensional random vectors with characteristic functions $\Phi_X$ and $\Phi_{X'}$ respectively. \\
$A1)$ The $p.d.f$s $f_{{X}}$ and $f_{{X'}}$ of  ${X}$ and  ${X'}$ are bounded and satisfy a Lipshitz condition of order 1.\\
$A2)$ the characteristic functions $\Phi_X$ and $\Phi_{X'}$ are absolutely integrable.\\
Under $A1$ and $A2$, and for any $T_j >0, j =1, ..., k$,
\begin{align}
sup\lbrace{|f_{{X}}(\textbf{x}) - f_{{X'}}(\textbf{x})|; \textbf{x} \in \mathbb{R}^k }\rbrace & \le \frac{1}{(2\pi)^k}\int_{-T_k}^{T_k} ... \int_{-T_1}^{T_1}|\Phi_{{X}}(\textbf{t}) - \Phi_{{X'}}(\textbf{t})|d\textbf{t}  + 4C\sqrt{3} \sum_{j=1}^k \frac{1}{T_j}
\end{align}
holds, where $C$ is an absolute constant.
\end{lemma}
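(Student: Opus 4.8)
The plan is to prove this as a $k$-dimensional Esséen-type smoothing inequality for densities. The starting point is Fourier inversion: under (A2) both $\Phi_X$ and $\Phi_{X'}$ are absolutely integrable, so each density admits the representation $f_X(\mathbf{x}) = (2\pi)^{-k}\int_{\mathbb{R}^k} e^{-i\langle \mathbf{t}, \mathbf{x}\rangle}\Phi_X(\mathbf{t})\,d\mathbf{t}$, and likewise for $f_{X'}$. Subtracting, for every $\mathbf{x}$,
\[
f_X(\mathbf{x}) - f_{X'}(\mathbf{x}) = \frac{1}{(2\pi)^k}\int_{\mathbb{R}^k} e^{-i\langle \mathbf{t}, \mathbf{x}\rangle}\big(\Phi_X(\mathbf{t}) - \Phi_{X'}(\mathbf{t})\big)\,d\mathbf{t}.
\]

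First I would split the domain of integration into the box $B = \prod_{j=1}^{k}[-T_j,T_j]$ and its complement $B^{c}$. On $B$, bounding $|e^{-i\langle \mathbf{t},\mathbf{x}\rangle}|\le 1$ and then taking the supremum over $\mathbf{x}$ produces exactly the first term $(2\pi)^{-k}\int_{-T_k}^{T_k}\cdots\int_{-T_1}^{T_1}|\Phi_X - \Phi_{X'}|\,d\mathbf{t}$ of the asserted bound. The entire content of the lemma is therefore in controlling the tail $(2\pi)^{-k}\int_{B^{c}}\big(|\Phi_X| + |\Phi_{X'}|\big)\,d\mathbf{t}$ and showing that it is $O\big(\sum_{j} T_j^{-1}\big)$ with the explicit constant $4C\sqrt{3}$.

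For the tail I would exploit (A1): the Lipschitz-order-$1$ smoothness from $(\ref{lip})$ forces decay of the characteristic functions. The basic device is a coordinate shift. For a fixed direction $j$ with $t_j\ne 0$, translating the inversion integral by $(\pi/t_j)\mathbf{e}_j$ and using $e^{i\pi}=-1$ gives $2\Phi_X(\mathbf{t}) = \int e^{i\langle\mathbf{t},\mathbf{x}\rangle}\big(f_X(\mathbf{x}) - f_X(\mathbf{x} + (\pi/t_j)\mathbf{e}_j)\big)\,d\mathbf{x}$, so $|\Phi_X(\mathbf{t})|$ is governed by the $L^{1}$ modulus of continuity of $f_X$ in the $j$-th coordinate, which (A1) bounds through $C$. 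Covering $B^{c}$ by $\bigcup_{j}\{|t_j|>T_j\}$ and applying this coordinate-by-coordinate yields the sum $\sum_{j}T_j^{-1}$; an equivalent and perhaps cleaner route is to convolve each density with a product of one-dimensional band-limited kernels $V_T$, so that the convolved difference is captured by the box integral, while the approximation error $|f - f*V_T|$ is bounded by $C\sum_{j}T_j^{-1}\int|z|\,v(z)\,dz$ directly from the Lipschitz condition.

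The \emph{main obstacle} is exactly this tail/approximation estimate, and making it simultaneously convergent and explicit. The naive pointwise inequality $|f_X(\mathbf{x}) - f_X(\mathbf{x} + h\mathbf{e}_j)|\le C|h|$ is not integrable in $\mathbf{x}$ over all of $\mathbb{R}^k$, so it must be combined with the fact that $f_X$ is a genuine bounded, integrable density — one optimizes the shift length against the total-mass bound to extract the correct $O(1/T_j)$ rate together with the constant $4C\sqrt{3}$. I would organize the whole argument by reducing the $k$-dimensional claim to the one-dimensional smoothing inequality applied successively in each coordinate, since (A1)--(A2) are phrased precisely so that every one-dimensional slice inherits the required boundedness, Lipschitz, and integrability; the factor $\sqrt{3}$ then emerges from the second-moment normalization of the chosen smoothing kernel.
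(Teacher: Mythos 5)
First, a point of reference: the paper does not prove this lemma at all --- it is imported verbatim from \cite{Roussas2001397} as an auxiliary result, so the only benchmark is Roussas's original argument, which is a Sadikova-type smoothing proof. Against that benchmark, your \emph{primary} route (Fourier inversion, split the $\mathbf{t}$-integral into the box $B=\prod_j[-T_j,T_j]$ and its complement, then bound $\int_{B^c}\bigl(|\Phi_X|+|\Phi_{X'}|\bigr)\,d\mathbf{t}$ by $O\bigl(\sum_j T_j^{-1}\bigr)$) has a genuine gap that cannot be repaired from the stated hypotheses. The coordinate-shift identity gives $2|\Phi_X(\mathbf{t})|\le\int_{\mathbb{R}^k}\bigl|f_X(\mathbf{x})-f_X(\mathbf{x}+(\pi/t_j)\mathbf{e}_j)\bigr|\,d\mathbf{x}$, but (A1) is a \emph{pointwise} Lipschitz bound: integrated over all of $\mathbb{R}^k$ it yields $+\infty$, and combined with $\int f_X=1$ it yields only the trivial bound $2$. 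Even if one grants an $L^1$-modulus of continuity of order $|h|$, the resulting decay $|\Phi_X(\mathbf{t})|=O\bigl(\min(1,1/|t_j|)\bigr)$ is not integrable over $\{|t_j|>T_j\}$, since $\int_{|t_j|>T_j}|t_j|^{-1}\,dt_j=\infty$; assumption (A2) forces the tail integral to vanish as $T_j\to\infty$ but gives no rate, let alone $O(T_j^{-1})$. So the ``truncate the inversion integral'' strategy cannot produce the second term of the inequality.

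Your alternative route --- convolution with a product of one-dimensional band-limited kernels --- is the correct one and is essentially how the inequality is actually proved. Writing $f_X-f_{X'}=(f_X-f_X*V)-(f_{X'}-f_{X'}*V)+(f_X*V-f_{X'}*V)$ with $\widehat{V}$ supported in $B$ and $|\widehat{V}|\le 1$, the last term is controlled by the box integral of $|\Phi_X-\Phi_{X'}|$ and the first two by $C\sum_j\int|z_j|\,V(\mathbf{z})\,d\mathbf{z}$ via (A1). What your sketch leaves out is the decisive computation: the obvious Fej\'er-type kernel has infinite first absolute moment, so one must choose a kernel with faster decay (for instance a normalized product of factors $\bigl(\sin(T_jz_j/4)/(T_jz_j/4)\bigr)^4$, whose $z_j^{-4}$ tails make $\int|z_j|V\,d\mathbf{z}$ finite and equal to an explicit constant times $T_j^{-1}$); the factor $\sqrt{3}$ and the constant $4$ come out of exactly that moment calculation. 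As submitted, the proposal leads with an unworkable tail estimate and relegates the viable argument to an unexecuted aside, so it does not yet constitute a proof.
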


\begin{lemma} \label{lemma64} (\cite{gargdewan2016}) Let $\left\lbrace X_n, n \ge 1\right\rbrace$  be a stationary sequence of associated random variables. For each j, let $Y_j = f(X_j)$ and $\tilde{Y_j} = \tilde{f}(X_j)$. Suppose that $f \ll \tilde{f}$.  Let  $\left\lbrace \ell_n, n \ge 1 \right\rbrace$   be a sequence of positive integers with $1\le \ell_n\le n$ and  $\ell_n$ $=$ $o(n)$ as $ n \to \infty$. Set $S_j(k) = \sum_{i= j +1}^{j+k}Y_i$ , $\bar{X_n} =\frac{1}{n} \sum_{j =1}^{n} Y_j$. Let $E(Y_1) = \mu$ and $E(Y_1^2) < \infty$. Define, $(write\hspace{2 mm} \ell = \ell_n)$,
\begin{equation}
B_n = \frac{1}{n-\ell+1} \Bigg(\sum_{j=0}^{n-\ell} \frac{| S_j(\ell) - \ell\bar{Y_n}|}{\sqrt{\ell}}\Bigg).
\end{equation}
Assume $\sum_{j=1}^{\infty}Cov(\tilde{Y_1},\tilde{Y_j}) < \infty$. Then,
\begin{equation}
B_n \to \sigma_f\sqrt{\frac{2}{\pi}}  \hspace{3 mm} \text{in}  \hspace{2 mm} L_2 \hspace{2 mm} \text{as} \hspace{2 mm}n \to \infty, \text{where $\sigma_f^2 = Var(Y_1) + 2\sum_{j = 2}^{\infty} Cov (Y_1, Y_j)$.}
\end{equation}

\end{lemma}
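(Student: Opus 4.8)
The plan is to prove $L_2$ convergence through the decomposition $E\big(B_n - \sigma_f\sqrt{2/\pi}\,\big)^2 = Var(B_n) + \big(E(B_n) - \sigma_f\sqrt{2/\pi}\,\big)^2$, treating the bias and the variance separately. First I would remove the random centering: with $\tilde B_n = \frac{1}{n-\ell+1}\sum_{j=0}^{n-\ell}\frac{|S_j(\ell) - \ell\mu|}{\sqrt\ell}$, the triangle inequality gives $|B_n - \tilde B_n| \le \sqrt\ell\,|\bar Y_n - \mu|$ and hence $E(B_n - \tilde B_n)^2 \le \ell\,Var(\bar Y_n)$. Since $f \ll \tilde f$ makes $\tilde f + f$ and $\tilde f - f$ coordinate-wise non-decreasing, expanding $Cov\big((\tilde f \pm f)(X_1), (\tilde f \pm f)(X_j)\big) \ge 0$ yields $|Cov(Y_1, Y_j)| \le Cov(\tilde Y_1, \tilde Y_j)$; together with the hypothesis $\sum_j Cov(\tilde Y_1, \tilde Y_j) < \infty$ this gives $Var(\bar Y_n) = O(1/n)$, so $E(B_n - \tilde B_n)^2 = O(\ell/n) \to 0$ because $\ell_n = o(n)$. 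It then suffices to analyse $\tilde B_n$.

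For the bias, stationarity gives $E(\tilde B_n) = E|W_\ell|$, where $W_\ell = \ell^{-1/2}\sum_{i=1}^\ell (Y_i - \mu)$. A short computation shows $Var(W_\ell) = Var(Y_1) + \frac{2}{\ell}\sum_{d=1}^{\ell-1}(\ell - d)Cov(Y_1, Y_{1+d}) \to \sigma_f^2$ by dominated convergence, the tails being dominated by $\sum_d Cov(\tilde Y_1, \tilde Y_{1+d})$. Thus $\{W_\ell\}$ is bounded in $L_2$, so $\{|W_\ell|\}$ is uniformly integrable. Appealing to the central limit theorem for partial sums of the functionals $f(X_i)$ of a stationary associated sequence under $f \ll \tilde f$ — which is obtained from the characteristic-function estimate of Lemma \ref{lemma61} — I get $W_\ell \xrightarrow{\mathcal{L}} N(0, \sigma_f^2)$ as $\ell = \ell_n \to \infty$. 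Convergence in law plus uniform integrability upgrades this to convergence of first absolute moments, $E|W_\ell| \to E\,|N(0,\sigma_f^2)| = \sigma_f\sqrt{2/\pi}$, i.e. $E(\tilde B_n) \to \sigma_f\sqrt{2/\pi}$.

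For the variance, write $\tilde B_n = (n-\ell+1)^{-1}\sum_{j=0}^{n-\ell} Z_j$ with $Z_j = |S_j(\ell) - \ell\mu|/\sqrt\ell$. The key observation is that $Z_j = \frac{1}{2\sqrt\ell}\,|P_j - Q_j - 2\ell\mu|$ is a $\frac{1}{2\sqrt\ell}$-Lipschitz function of the pair $(P_j, Q_j) = \big(\sum_{i\in B_j}(\tilde f + f)(X_i),\ \sum_{i\in B_j}(\tilde f - f)(X_i)\big)$ over the block $B_j = \{j+1, \dots, j+\ell\}$, and that the whole collection $\{P_j, Q_j\}$ is associated because $\tilde f \pm f$ are non-decreasing. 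The covariance inequality for Lipschitz functions of associated variables (again a consequence of Lemma \ref{lemma61}) then gives $|Cov(Z_j, Z_{j'})| \le \frac{1}{4\ell}\big[Cov(P_j,P_{j'}) + Cov(P_j,Q_{j'}) + Cov(Q_j,P_{j'}) + Cov(Q_j,Q_{j'})\big]$, and the four terms collapse to $\frac{1}{\ell}\sum_{i\in B_j}\sum_{i'\in B_{j'}}Cov(\tilde Y_i, \tilde Y_{i'})$ since $(\tilde f + f) + (\tilde f - f) = 2\tilde f$. As each index belongs to at most $\ell$ blocks and $\sum_{i,i'\le n}Cov(\tilde Y_i, \tilde Y_{i'}) = O(n)$ by stationarity and summability, summing over all pairs $(j,j')$ yields $\sum_{j,j'}|Cov(Z_j, Z_{j'})| = O(\ell n)$, whence $Var(\tilde B_n) = O(\ell n)/(n-\ell+1)^2 = O(\ell/n) \to 0$.

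Combining the three parts gives $E(B_n - \sigma_f\sqrt{2/\pi})^2 \to 0$, which is the claim. The hard part is the bias step: passing from the distributional central limit theorem to convergence of the first absolute moment requires both a CLT for the non-monotone functional $f(X_i)$ — where the assumption $f \ll \tilde f$ (rather than mere boundedness or monotonicity) is what makes the characteristic-function method of Lemma \ref{lemma61} applicable — and the uniform-integrability argument that licenses $E|W_\ell| \to \sigma_f\sqrt{2/\pi}$. By contrast, once the Lipschitz covariance inequality is paired with the $(P_j, Q_j)$ decomposition, the variance bound is essentially a counting argument controlled by $\ell_n = o(n)$.
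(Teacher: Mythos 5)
Your proof is correct, but note that this paper does not actually prove Lemma \ref{lemma64}: it is imported verbatim from the cited reference \cite{gargdewan2016} and listed among the auxiliary results, so there is no in-paper argument to compare yours against. Your route is the standard Peligrad--Suresh-type argument for block estimators of the asymptotic standard deviation, adapted to association, and all three steps check out: the replacement of the random centering $\ell\bar Y_n$ by $\ell\mu$ costs $O(\ell\,\mathrm{Var}(\bar Y_n)) = O(\ell/n)$, using the correct observation that all four sign choices in $Cov\big((\tilde f\pm f)(X_1),(\tilde f\pm f)(X_j)\big)\ge 0$ give $|Cov(Y_1,Y_j)|\le Cov(\tilde Y_1,\tilde Y_j)$; the bias term is handled by Newman's CLT for $f\ll\tilde f$ (Theorem 17 of \cite{MR789244}, the same result this paper invokes for Theorem \ref{theorem25}) together with uniform integrability of $|W_\ell|$ from $L_2$-boundedness; and the variance bound via the identity $2S_j(\ell)=P_j-Q_j$ and the Lipschitz covariance inequality for the associated block sums $\{P_j,Q_j\}$ correctly collapses to $\frac{1}{\ell}\sum_{i\in B_j}\sum_{i'\in B_{j'}}Cov(\tilde Y_i,\tilde Y_{i'})$ and hence to $O(\ell/n)$. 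One small point worth making explicit: both your bias step and the conclusion itself require $\ell_n\to\infty$, which the statement of the lemma omits (it only says $1\le\ell_n\le n$ and $\ell_n=o(n)$); this is evidently intended, since the paper later takes $\ell_n=[n^{1/3}]$, but you should state it as a hypothesis. Also, if $\sigma_f^2=0$ the CLT you invoke does not apply, but then $W_\ell\to 0$ in $L_2$ and the limit $\sigma_f\sqrt{2/\pi}=0$ is recovered trivially, so the conclusion survives; a one-line remark covering that degenerate case would make the argument airtight.
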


\begin{lemma}\label{roussaspaper} (\cite{roussas1993curve}) Let the sequence $\{X_n, n \ge 1\}$ be a stationary associated sequence of random variables with bounded one-dimensional probability density function. Suppose,
\begin{equation}
u(n) = 2 \sum_{j = n+1}^{\infty} Cov(X_1, X_j) = O(n^{-(s-2)/2}), \text{ for some $s > 2$}.
\end{equation}
Let $\psi_n$ be any positive norming factor. Then, for any bounded interval $[-C_1, C_1]$, we have,
\begin{equation*}
\underset{x \in [-C_1,C_1]}{sup} \psi_n|F_n(x) - F(x)| \to 0, \text{ a.s as $n \to \infty$, provided } \sum_{n =1}^{\infty} n^{-s/2} \psi_n^{s+2} < \infty.
\end{equation*}
\end{lemma}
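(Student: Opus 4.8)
The plan is to establish the uniform almost-sure convergence $\sup_{x \in [-C_1, C_1]} \psi_n |F_n(x) - F(x)| \to 0$ by combining a pointwise rate of convergence with a standard chaining argument over a finite grid, exploiting the boundedness of the support and the monotonicity of distribution functions. First I would derive a pointwise bound: for a fixed $x$, write $F_n(x) - F(x) = \frac{1}{n}\sum_{j=1}^n \big( I(X_j \le x) - F(x) \big)$, so that $F_n(x) - F(x)$ is a normalized sum of the centered, bounded, stationary random variables $Y_j(x) = I(X_j \le x) - F(x)$. Since the indicator $I(\cdot \le x)$ is a (non-increasing) monotone function of each $X_j$, the sequence $\{-Y_j(x)\}$ inherits association from $\{X_j\}$, and moreover $\mathrm{Cov}(Y_1(x), Y_j(x))$ can be controlled in terms of $\mathrm{Cov}(X_1, X_j)$ through the covariance inequality for associated random variables (the standard bound expressing the covariance of bounded monotone transforms in terms of the covariance of the underlying variables, available since the one-dimensional density is bounded). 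The hypothesis $u(n) = 2\sum_{j=n+1}^\infty \mathrm{Cov}(X_1, X_j) = O(n^{-(s-2)/2})$ then yields a summable tail for the covariance structure of $\{Y_j(x)\}$, which feeds a moment or maximal inequality for partial sums of associated sequences to give a pointwise convergence rate of the required order.

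Next I would upgrade the pointwise bound to a uniform one over the compact interval $[-C_1, C_1]$. Partition $[-C_1, C_1]$ into $m_n$ subintervals with endpoints $-C_1 = x_0 < x_1 < \cdots < x_{m_n} = C_1$, chosen so that $F(x_{i}) - F(x_{i-1})$ is at most of order $1/m_n$; this is possible because $F$ is continuous on the bounded support under the boundedness-of-density assumption. For any $x \in [x_{i-1}, x_i]$, monotonicity of both $F_n$ and $F$ gives the sandwich
\begin{equation*}
F_n(x_{i-1}) - F(x_i) \le F_n(x) - F(x) \le F_n(x_i) - F(x_{i-1}),
\end{equation*}
so that $|F_n(x) - F(x)|$ is bounded above by $\max_i |F_n(x_i) - F(x_i)|$ plus the oscillation term $\max_i \big( F(x_i) - F(x_{i-1}) \big)$. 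Multiplying through by $\psi_n$ reduces the supremum over the continuum to a maximum over the finitely many grid points, at the cost of the deterministic oscillation term of order $\psi_n / m_n$, which I would drive to zero by taking $m_n \to \infty$ sufficiently fast (e.g. $m_n$ polynomial in $n$).

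The almost-sure control of $\psi_n \max_{0 \le i \le m_n} |F_n(x_i) - F(x_i)|$ is where the summability hypothesis $\sum_{n=1}^\infty n^{-s/2} \psi_n^{s+2} < \infty$ enters decisively, and this is the step I expect to be the main obstacle. The strategy is a Borel--Cantelli argument: for each grid point, apply a Markov-type inequality using an $(s+2)$-th moment bound on $\sqrt{n}(F_n(x_i) - F(x_i))$ obtained from a Rosenthal-type moment inequality for partial sums of associated random variables (such bounds are available for associated sequences with summable covariances and are the natural analogue of the classical Rosenthal inequality). This produces $P\big( \psi_n |F_n(x_i) - F(x_i)| > \varepsilon \big) \le C \varepsilon^{-(s+2)} n^{-s/2} \psi_n^{s+2}$; summing over the $m_n$ grid points introduces a factor $m_n$, and I would verify that $m_n$ can still be chosen large enough to kill the oscillation term while keeping $\sum_n m_n \, n^{-s/2} \psi_n^{s+2} < \infty$, which holds after reconciling the polynomial growth of $m_n$ against the given summability condition. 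The delicate part is calibrating the moment order $s+2$ against the covariance decay rate so that the associated-variable moment inequality actually delivers the $n^{-s/2}$ factor; once that matching is confirmed, Borel--Cantelli gives $\psi_n \max_i |F_n(x_i) - F(x_i)| \to 0$ almost surely, and combining with the vanishing oscillation term completes the proof.
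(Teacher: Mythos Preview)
The paper does not supply its own proof of this lemma; it is quoted verbatim as an auxiliary result from \cite{roussas1993curve} and used as a black box in the proof of Theorem~\ref{theorem41}. So there is no in-paper argument to compare your proposal against.

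That said, your outline is the standard route to results of this type and is essentially what Roussas does: a pointwise tail bound via a high-moment inequality for partial sums of (functions of) associated variables, a monotonicity-based grid reduction on the compact interval, and Borel--Cantelli. Two points deserve care if you want to turn the sketch into a proof. First, the passage from $\mathrm{Cov}(X_1,X_j)$ to $\mathrm{Cov}(I(X_1\le x),I(X_j\le x))$ under a bounded-density assumption yields only a fractional power (typically $\mathrm{Cov}(X_1,X_j)^{1/3}$), so you should check that the decay hypothesis on $u(n)$ still feeds the moment inequality with the right exponent after this loss. Second, the crucial step is the moment bound $E\big|\sum_{j=1}^n (I(X_j\le x)-F(x))\big|^{s+2}=O(n^{(s+2)/2})$ uniformly in $x$; this is where the specific rate $u(n)=O(n^{-(s-2)/2})$ is tuned to the moment order $s+2$, and you should cite the precise associated-sequence moment inequality (e.g.\ Birkel-type) that delivers it rather than invoking a generic Rosenthal analogue. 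Once those two ingredients are pinned down, the grid size $m_n$ can be chosen as a small power of $n$ and absorbed into the summability condition as you describe.
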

\subsection{Proofs of main results}

The proof of Theorem $\ref{theorem25}$ requires the following result.
\begin{lemma}\label{pdflemma2}
Assume the density functions $f_{X_{i_1},  X_{i_2}, X_{i_3}, X_{i_4}}$ and $f_{X_{i_1}',  X_{i_2}, X_{i_3}, X_{i_4}}$ are bounded and satisfy the  Lipshitz condition of order 1 (defined by (\ref{lip})), and let the characteristic functions $\Phi_ {X_{i_1},  X_{i_2}, X_{i_3}, X_{i_4}}$ and $\Phi_ {X_{i_1}',  X_{i_2}, X_{i_3}, X_{i_4}}$ be  absolutely integrable. Then, for any $T > 0$, 
\begin{align}\label{theorem31}
& \underset{x_1, x_2, x_3, x_4}{sup}{|f_{X_{i_1},  X_{i_2}, X_{i_3}, X_{i_4}} (x_1, x_2, x_3, x_4) - f_{X_{i_1}',  X_{i_2}, X_{i_3}, X_{i_4}}(x_1, x_2, x_3, x_4)|} \nonumber \\
& \le C\frac{T^6}{(2 \pi)^4} [Cov({X}_{i_1}, {X}_{i_2})+ Cov({X}_{i_1}, {X}_{i_3}) + Cov({X}_{i_1}, {X}_{i_4})] + \frac{16C\sqrt{3}}{T},
\end{align}
where C is an absolute constant.
Solving for an optimal $T>0$, we get,
\begin{align} \label{theorem32}
&  \underset{x_1, x_2, x_3, x_4}{sup}{|f_{X_{i_1},  X_{i_2}, X_{i_3}, X_{i_4}} (x_1, x_2, x_3, x_4) - f_{X'_{i_1},  X_{i_2}, X_{i_3}, X_{i_4}}(x_1, x_2, x_3, x_4)| }\nonumber \\ 
& \le C [Cov({X}_{i_1}, {X}_{i_2})^{1/7}+ Cov({X}_{i_1}, {X}_{i_3})^{1/7} + Cov({X}_{i_1}, {X}_{i_4})^{1/7}],
\end{align}
where C is an absolute constant.
\end{lemma}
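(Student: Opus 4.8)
The plan is to combine the Roussas smoothing inequality (Lemma \ref{lemma62}) with Newman's covariance inequality for associated sequences (Lemma \ref{lemma61}). First I would apply Lemma \ref{lemma62} with $k=4$ to the two random vectors $\mathbf{X}=(X_{i_1},X_{i_2},X_{i_3},X_{i_4})$ and $\mathbf{X}'=(X_{i_1}',X_{i_2},X_{i_3},X_{i_4})$, choosing the common norming value $T_1=T_2=T_3=T_4=T$. The hypotheses of that lemma (boundedness and the Lipshitz condition (\ref{lip}) on both densities, and absolute integrability of both characteristic functions) are exactly the standing assumptions of the statement, and the finite second moments needed throughout are automatic from the uniform bound $|X_1|\le C_1$. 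This produces the remainder term $4C\sqrt{3}\sum_{j=1}^4 T^{-1}=16C\sqrt{3}/T$ directly and reduces the problem to estimating $\int_{[-T,T]^4}|\Phi_{\mathbf{X}}(\mathbf{t})-\Phi_{\mathbf{X}'}(\mathbf{t})|\,d\mathbf{t}$.

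The next step is to recognise the integrand as a covariance. Since $X_{i_1}'$ has the same law as $X_{i_1}$ and is independent of $(X_{i_2},X_{i_3},X_{i_4})$, the characteristic function factorises as $\Phi_{\mathbf{X}'}(\mathbf{t})=\phi_{i_1}(t_1)\,\Phi_{(X_{i_2},X_{i_3},X_{i_4})}(t_2,t_3,t_4)$, where $\phi_{i_1}$ is the marginal characteristic function of $X_{i_1}$, so that
\begin{equation*}
\Phi_{\mathbf{X}}(\mathbf{t})-\Phi_{\mathbf{X}'}(\mathbf{t})=\mathrm{Cov}\Big(e^{it_1 X_{i_1}},\,e^{i(t_2 X_{i_2}+t_3 X_{i_3}+t_4 X_{i_4})}\Big).
\end{equation*}
The crucial step is then to bound this covariance using Lemma \ref{lemma61} applied in its two-block form (the content of Newman's inequality in \cite{MR789244}), taking the single index $i_1$ as one block and $\{i_2,i_3,i_4\}$ as the other; the subcollection $(X_{i_1},X_{i_2},X_{i_3},X_{i_4})$ is associated as part of the associated sequence. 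This yields $|\Phi_{\mathbf{X}}(\mathbf{t})-\Phi_{\mathbf{X}'}(\mathbf{t})|\le |t_1|\sum_{j=2}^4 |t_j|\,\mathrm{Cov}(X_{i_1},X_{i_j})$, in which---and this is the whole point---only the covariances between $X_{i_1}$ and the remaining coordinates appear, the internal covariances $\mathrm{Cov}(X_{i_j},X_{i_k})$ with $j,k\ge 2$ being absent because the block $(X_{i_2},X_{i_3},X_{i_4})$ is left untouched in passing from $\mathbf{X}$ to $\mathbf{X}'$.

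Integrating term by term over $[-T,T]^4$ is then routine: each summand contributes $\big(\int_{-T}^T|s|\,ds\big)^2\big(\int_{-T}^T ds\big)^2=4T^6$, so after absorbing the factor $4$ into $C$ one obtains (\ref{theorem31}). Finally I would optimise the right-hand side of (\ref{theorem31}) over $T>0$: writing it as $aT^6+b/T$ with $a\propto S:=\mathrm{Cov}(X_{i_1},X_{i_2})+\mathrm{Cov}(X_{i_1},X_{i_3})+\mathrm{Cov}(X_{i_1},X_{i_4})$ and $b=16C\sqrt{3}$, the minimiser is $T_\ast=(b/6a)^{1/7}$ and the minimal value is of order $a^{1/7}b^{6/7}\asymp S^{1/7}$. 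Subadditivity of $t\mapsto t^{1/7}$ (concavity, since $1/7<1$) then splits $S^{1/7}$ into the three separate terms, giving (\ref{theorem32}).

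The only genuine obstacle is the block covariance estimate in the second step: one must invoke Newman's inequality in the form that pairs the single coordinate against the whole remaining block, so that just the three cross-covariances survive rather than the full collection of pairwise covariances that the naive product-factorisation version of Lemma \ref{lemma61} would generate. Everything else---the Roussas bound and the calculus of minimising $aT^6+b/T$---is mechanical.
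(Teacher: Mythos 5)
Your proposal is correct and follows essentially the same route as the paper: Newman's covariance inequality to bound $|\Phi_{\mathbf{X}}(\mathbf{t})-\Phi_{\mathbf{X}'}(\mathbf{t})|$ by $C[|t_1t_2|\,Cov(X_{i_1},X_{i_2})+|t_1t_3|\,Cov(X_{i_1},X_{i_3})+|t_1t_4|\,Cov(X_{i_1},X_{i_4})]$, then Lemma \ref{lemma62} with $T_1=\cdots=T_4=T$ to obtain (\ref{theorem31}), then choosing $T$ of order $S^{-1/7}$ and using subadditivity of $t\mapsto t^{1/7}$ to obtain (\ref{theorem32}). If anything, your explicit use of the two-block form of Newman's inequality (pairing $X_{i_1}$ against the untouched block $(X_{i_2},X_{i_3},X_{i_4})$, so that only the three cross-covariances survive) is more careful than the paper's one-line appeal to Lemma \ref{lemma61}, whose stated product-of-marginals form would by itself introduce the unwanted internal covariances.
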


\begin{proof}
 Let $\textbf{t} = (t_1, t_2, t_3, t_4) \in \mathbb{R}^4$. Using Lemma $\ref{lemma61}$, we get
\begin{align}\label{charlemma3}
 & |\Phi_{X_{i_1},  X_{i_2}, X_{i_3}, X_{i_4}}(\textbf{t}) - \Phi_{X'_{i_1},  X_{i_2}, X_{i_3}, X_{i_4}}(\textbf{t}) |  \nonumber \\
 & \le C[|t_1t_2| Cov({X}_i, {X}_j) + |t_1t_3| Cov({X}_i, {X}_k) + |t_1t_4|Cov({X}_i, {X}_l)].
\end{align}
Using Lemma  $\ref{lemma62}$ and $(\ref{charlemma3})$, we get $(\ref{theorem31})$. 

Putting $T = [Cov({X}_{i_1}, {X}_{i_2})+ Cov({X}_{i_1}, {X}_{i_3}) + Cov({X}_{i_1}, {X}_{i_4}) ]^{-1/7}$ in $(\ref{theorem31})$, we get $(\ref{theorem32})$. 
\end{proof}

 \textbf{Proof of Theorem $\ref{theorem25}$}. 
 \begin{proof}   Define    for all $\textbf{x}_{i, j, k, l} = (x_i, x_j, x_k, x_l)\in [-C_1, C_1]^4$,
  \begin{equation*} 
  f_{i, (j, k, l)} = f_{X_i,  X_j, X_k, X_l}(\textbf{x}_{i, j, k, l}) - f_{X'_i, X_j,X_k, X_l}(\textbf{x}_{i, j, k, l}).
  \end{equation*}
Using Lemma $\ref{pdflemma2}$, and under the assumption $(T1)$ given in Section $\ref{CLT}$, we get, for all distinct $i, j, k, l$, such that $1 \le i < j \le n$ and $1 \le k < l \le n$,
\begin{align}
& |E( h^{(2)}(X_i, X_j) h^{(2)}(X_k, X_l))| \nonumber \\
& =   |E( h^{(2)}(X_i, X_j) h^{(2)}(X_k, X_l)) - E( h^{(2)}(X_i', X_j) h^{(2)}(X_k, X_l))| \label{546} \\
 & =  | \int_{[-C_1, C_1]^4} f_{i, (j, k, l)}dx_idx_jdx_kdx_l  \le CC_1^4 ||f_{i, (j, k, l)}||_{\infty} \nonumber \\
& \le C(Cov({X}_i, {X}_j)^{1/7} + Cov({X}_i, {X}_k)^{1/7}  + Cov({X}_i, {X}_l)^{1/7} ).  \label{theorem34}
\end{align}
The equality in $(\ref{546})$ follows  as by definition $h^{(2)}(x, y)$ is a degenerate kernel. The  inequality in $(\ref{theorem34})$ follows from $(\ref{theorem32})$. Similarly, 
\begin{align}
& |E( h^{(2)}(X_i, X_j) h^{(2)}(X_k, X_l))|  \le C(Cov({X}_j, {X}_i)^{1/7} + Cov({X}_j, {X}_k)^{1/7}  + Cov({X}_j, {X}_l)^{1/7} )  \text{ and } \label{theorem35}\\
& |E( h^{(2)}(X_i, X_j) h^{(2)}(X_k, X_l))| \le C(Cov({X}_k, {X}_j)^{1/7} + Cov({X}_k, {X}_i)^{1/7}  + Cov({X}_k, {X}_l)^{1/7} ).  \label{theorem36}
\end{align}
Combining $(\ref{theorem34})$, $(\ref{theorem35})$ and $(\ref{theorem36})$,
\begin{equation} \label{theorem3alldiff}
 |E( h^{(2)}(X_i, X_j) h^{(2)}(X_k, X_l))| \le CT^{1/3}.
\end{equation}
where, $T = [Cov({X}_i, {X}_j)^{1/7} + Cov({X}_i, {X}_k)^{1/7}  + Cov({X}_i, {X}_l)^{1/7}]\times[Cov({X}_j, {X}_i)^{1/7} +\\ Cov({X}_j, {X}_k)^{1/7}  + Cov({X}_j, {X}_l)^{1/7}]\times[Cov({X}_k, {X}_j)^{1/7} + Cov({X}_k, {X}_i)^{1/7}  + Cov({X}_k, {X}_l)^{1/7}]$.

Next, assume that there are 3 distinct indices in  $i, j, k, l$, such that, $1 \le i < j \le n$ and $1 \le k < l \le n$. For example, assume $j = k$, then using $(T2)$,
\begin{align} \label{3assfchap5}
|E( h^{(2)}(X_i, X_j) h^{(2)}(X_j, X_l))|  \le C(Cov({X}_i, {X}_j)^{1/7} + Cov({X}_i, {X}_l)^{1/7} ).
\end{align}
Similarly, we can calculate for other combinations with 3 distinct  indices in $i, j, k, l$.

Note that, as $h^{(2)}(x, y)$ is bounded, 
\begin{align} \label{theorem3same}
\sum_{1 \le i < j \le n}  |E( h^{(2)}(X_i, X_j)^2)|  = O(n^2).
\end{align}

Hence, from $(\ref{theorem3alldiff})$,  $(\ref{3assfchap5})$ and $(\ref{theorem3same})$, and using $\sum_{j=1}^\infty Cov({X}_1, {X}_j)^{1/21}$ $<$ $\infty$, we get 
\begin{equation}\label{theorem3same11}
\sum_{1 \le i < j \le n} \sum_{1 \le k < l \le n} |E( h^{(2)}(X_i, X_j)h^{(2)}(X_k, X_l))|  = O(n^2).
\end{equation}
 Using the Hoeffding's decomposition for $U_n(\rho)$ and the  central limit theorem for stationary functions of associated random variables given in Theorem $17$ of \cite{MR789244}, rest of the proof follows similarly as the proof of Theorem 3.6 of \cite{GargdewanSPL2}. 
\end{proof}

\textbf{Proof of Theorem $\ref{theorem41}$}
\begin{itemize}
\item[(i)] For Deshpande's test  we took $\hat{\rho}_1(x) = \frac{F_n(x/b) + 1 - F_n(xb)}{2}$. Putting $\psi_n  = O(n^{1/4})$ and $s > 6$ in Lemma $\ref{roussaspaper}$, we get,  $|B_n - \hat{B}_n|$ $\to 0$ a.s as $n \to \infty$.
\item[(ii)] For Hollander and Proschan's test, we took $\hat{\rho}_1(x) =  \frac{1}{3}\Big( \sum_{i:X_i \le x} F_n(x-X_i)/n + \sum_{i=1}^n\bar{F}_n(x+X_i)/n + \\ \sum_{i: X_i \ge x} F_n(X_i-x)/n \Big)$.  Observe that,
\begin{align}
& \underset{x \in [0, C_1]} {sup} \Big| {\sum_{i =1}^n \frac{(F_n(x - X_i) - E(F(x - X_i)))}{n^{3/4}}} \Big|  \le C\underset{y \in [0, C_1]} {sup} \Big| {\sum_{i =1}^n \frac{F_n(y) - F(y)}{n^{3/4}}} \Big|  \nonumber \\  
& \le C\underset{y \in [0, C_1]} {sup} n^{1/4}\Big| F_n(y) - F(y) \Big|   \to 0 \text{ a.s. as $n \to \infty$,}
\end{align}
 under the assumption of $s > 6$ and putting $\psi_n =  O(n^{1/4})$ in Lemma $\ref{roussaspaper}$. The convergence of the last two terms follow similarly. 
\item[(iii)] For Ahmad's test, we took $\hat{\rho}_1(x) = 2x\bar{F}_n(x) - \frac{x}{2} + \frac{3\bar{X}_n}{2} - 2 \frac{\sum_{i =1}^n X_i I(X_i > x)}{n}$. The convergence of the first 2 terms follows easily. For the last 2 terms, observe that
 \begin{equation}
 \Big| \frac{\sum_{i =1}^n (X_i - E(X_i))}{n^{3/4}} \Big| \le C_1 C\underset{y \in [0, C_1]} {sup} n^{1/4}\Big| F_n(y) - F(y) \Big| \to 0 \text{ a.s. as $n \to \infty$}
\end{equation}
and
 \begin{align}
& \underset{x \in [0, C_1]} {sup}\Big| \frac{\sum_{i =1}^n (X_iI(X_i > x) - E(X_iI(X_i > x))}{n^{3/4}} \Big| \le C_1 C\underset{y \in [0, C_1]} {sup} n^{1/4}\Big| F_n(y) - F(y) \Big| \to 0 \text{ a.s. as $n \to \infty$.} 
\end{align}
using Lemma $\ref{roussaspaper}$ with $s > 6$ and $\psi_n =  O(n^{1/4})$.

\end{itemize}

\section*{Acknowledgements}

This work was done when the first author was working as a research fellow at Indian Statistical Institute, Delhi. The author would like to  thank the institute for all the facilities.

\bibliographystyle{yearfirst}
\setlength{\bibsep}{0.0pt}
{\footnotesize\bibliography{References}}

\begin{thebibliography}{33}
\providecommand{\natexlab}[1]{#1}
\expandafter\ifx\csname urlstyle\endcsname\relax
  \providecommand{\doi}[1]{doi:\discretionary{}{}{}#1}\else
  \providecommand{\doi}{doi:\discretionary{}{}{}\begingroup
  \urlstyle{rm}\Url}\fi

\bibitem[{Abramov(2006)}]{abramov2006stochastic}
Abramov V.M. (2006).
\newblock Stochastic inequalities for single-server loss queueing systems.
\newblock {\em Stoch. Proc. Appln.\/}, 24(6):1205--1221.

\bibitem[{Ahmad(1992)}]{ahmad1992}
Ahmad I.A. (1992).
\newblock A new test for mean residual life times.
\newblock {\em Biometrika\/}, 79(2):416--419.

\bibitem[{Barlow and Proschan(1975)}]{barlow1975statistical}
Barlow R. and Proschan F. (1975).
\newblock {\em Statistical theory of reliability and life testing: probability
  models\/}.
\newblock Holt, Rinehart and Winston.

\bibitem[{Bulinski and Shashkin(2007)}]{Bulinski:1701595}
Bulinski A. and Shashkin A. (2007).
\newblock {\em {Limit theorems for associated random fields and related
  systems}\/}.
\newblock World Scientific, Singapore.

\bibitem[{Cordeiro et~al.(2014)Cordeiro, Ortega, Popović, and
  Pescim}]{CORDEIRO2014465}
Cordeiro G.M., Ortega E.M., Popović B.V., and Pescim R.R. (2014).
\newblock The lomax generator of distributions: Properties, minification
  process and regression model.
\newblock {\em Applied Mathematics and Computation\/}, 247:465 -- 486.

\bibitem[{Deshpande(1983)}]{DESHPANDE01081983}
Deshpande J.V. (1983).
\newblock A class of tests for exponentiality against increasing failure rate
  average alternatives.
\newblock {\em Biometrika\/}, 70(2):514--518.

\bibitem[{Deshpande and Purohit(2005)}]{deshpande2005life}
Deshpande J.V. and Purohit S.G. (2005).
\newblock {\em Life time data: Statistical models and methods\/}.
\newblock World Scientific, Singapore.

\bibitem[{Dewan and Prakasa~Rao(2001)}]{MR1861150}
Dewan I. and Prakasa~Rao B.L.S. (2001).
\newblock Asymptotic normality for {$U$}-statistics of associated random
  variables.
\newblock {\em J. Statist. Plann. Inference\/}, 97(2):201--225.

\bibitem[{Dewan and Prakasa~Rao(2002)}]{Dewan20029}
Dewan I. and Prakasa~Rao B.L.S. (2002).
\newblock Central limit theorem for {$U$}-statistics of associated random
  variables.
\newblock {\em Statist. Probab. Lett.\/}, 57(1):9 -- 15.

\bibitem[{Dewan and Prakasa~Rao(2015)}]{Dewan2015147}
Dewan I. and Prakasa~Rao B.L.S. (2015).
\newblock Corrigendum to “{C}entral limit theorem for {$U$}-statistics of
  associated random variables” [{S}tatist. {P}robab. {L}ett. 57 (1) (2002)
  9–15].
\newblock {\em Statist. Probab. Lett.\/}, 106:147 -- 148.

\bibitem[{Downton(1970)}]{downton}
Downton F. (1970).
\newblock Bivariate exponential distributions in reliability theory.
\newblock {\em J. R. Stat. Soc. Ser. B Stat. Methodol.\/}, 32(3):408--417.

\bibitem[{Durot(2008)}]{4585402}
Durot C. (2008).
\newblock Testing convexity or concavity of a cumulated hazard rate.
\newblock {\em IEEE Transactions on Reliability\/}, 57(3):465--473.

\bibitem[{Esary et~al.(1967)Esary, Proschan, and Walkup}]{MR0217826}
Esary J.D., Proschan F., and Walkup D.W. (1967).
\newblock Association of random variables with applications.
\newblock {\em Ann. Math. Statist.\/}, 38(5):1466--1474.

\bibitem[{Garg and Dewan(2015)}]{Garg2015209}
Garg M. and Dewan I. (2015).
\newblock On asymptotic behavior of {U}-statistics based on associated random
  variables.
\newblock {\em Statist. Probab. Lett.\/}, 105:209 -- 220.

\bibitem[{Garg and Dewan(2018{\natexlab{a}})}]{gargdewan2016}
Garg M. and Dewan I. (2018{\natexlab{a}}).
\newblock On estimation of limiting variance of partial sums of functions of
  associated random variables.
\newblock {\em J. Stat. Plan. Infer.\/}, 192:1 -- 17.

\bibitem[{Garg and Dewan(2018{\natexlab{b}})}]{GargdewanSPL2}
Garg M. and Dewan I. (2018{\natexlab{b}}).
\newblock On limiting distribution of {U}-statistics based on associated random
  variables.
\newblock {\em Stat. Probab. Lett.\/}, 132(Supplement C):7 -- 16.

\bibitem[{Hall and Keilegom(2005)}]{10.2307/3448683}
Hall P. and Keilegom I.V. (2005).
\newblock Testing for monotone increasing hazard rate.
\newblock {\em Ann. Statist.\/}, 33(3):1109--1137.

\bibitem[{Hoeffding(1948)}]{MR0026294}
Hoeffding W. (1948).
\newblock A class of statistics with asymptotically normal distribution.
\newblock {\em Ann. Math. Statistics\/}, 19(3):293--325.

\bibitem[{Hollander and Proschan(1972)}]{hollander1972testing}
Hollander M. and Proschan F. (1972).
\newblock Testing whether new is better than used.
\newblock {\em Ann. Math. Statistics\/}, 43(4):1136--1146.

\bibitem[{Lai and Xie(2006)}]{lai2006stochastic}
Lai C. and Xie M. (2006).
\newblock {\em Stochastic Ageing and Dependence for Reliability\/}.
\newblock Springer, New York.

\bibitem[{Li et~al.(2011)Li, Coit, and Elsayed}]{5976637}
Li J., Coit D.W., and Elsayed E.A. (2011).
\newblock Reliability modeling of a series system with correlated or dependent
  component degradation processes.
\newblock In {\em 2011 International Conference on Quality, Reliability, Risk,
  Maintenance, and Safety Engineering\/}, pages 388--393.

\bibitem[{Livny et~al.(1993)Livny, Melamed, and Tsiolis}]{mironimpact}
Livny M., Melamed B., and Tsiolis A.K. (1993).
\newblock The impact of autocorrelation on queuing systems.
\newblock {\em Management Science\/}, 39(3):322--339.

\bibitem[{Marshall and Olkin(1967)}]{marshall12345}
Marshall A.W. and Olkin I. (1967).
\newblock A multivariate exponential distribution.
\newblock {\em J. Am. Stat. Assoc.\/}, 62(317):30--44.

\bibitem[{Marshall and Olkin(1983)}]{marshall1983}
Marshall A.W. and Olkin I. (1983).
\newblock Domains of attraction of multivariate extreme value distributions.
\newblock {\em Ann. Probab.\/}, 11(1):168--177.

\bibitem[{Newman(1984)}]{MR789244}
Newman C.M. (1984).
\newblock Asymptotic independence and limit theorems for positively and
  negatively dependent random variables.
\newblock In {\em Inequalities in statistics and probability\/}, volume~5 of
  {\em IMS Lecture Notes Monogr. Ser.\/}, pages 127--140. Inst. Math. Statist.,
  Hayward, CA.

\bibitem[{Ohn et~al.(2004)Ohn, Taylor, and Pagan}]{Ecojournal}
Ohn J., Taylor L.W., and Pagan A. (2004).
\newblock Testing for duration dependence in economic cycles.
\newblock {\em Econom. J.\/}, 7(2):528--549.

\bibitem[{Oliveira(2012)}]{oliveira2012asymptotics}
Oliveira P. (2012).
\newblock {\em Asymptotics for Associated Random Variables\/}.
\newblock Springer, Verlag Berlin Heidelberg.

\bibitem[{Pitt(1982)}]{pitt1982}
Pitt L.D. (1982).
\newblock Positively correlated normal variables are associated.
\newblock {\em Ann. Probab.\/}, 10(2):496--499.

\bibitem[{Prakasa~Rao(2012)}]{MR3025761}
Prakasa~Rao B.L.S. (2012).
\newblock {\em Associated sequences, demimartingales and nonparametric
  inference\/}.
\newblock Birkh\"auser/Springer Basel AG, Basel.

\bibitem[{{R Core Team}(2016)}]{manR}
{R Core Team} (2016).
\newblock {\em R: A Language and Environment for Statistical Computing\/}.
\newblock R Foundation for Statistical Computing, Vienna, Austria.

\bibitem[{Roussas(1993)}]{roussas1993curve}
Roussas G.G. (1993).
\newblock Curve estimation in random fields of associated processes.
\newblock {\em J. Nonparametr. Statist.\/}, 2(3):215--224.

\bibitem[{Roussas(2001)}]{Roussas2001397}
Roussas G.G. (2001).
\newblock An esseen-type inequality for probability density functions, with an
  application.
\newblock {\em Statist. Probab. Lett.\/}, 51(4):397 -- 408.

\bibitem[{Srivastava et~al.(2012)Srivastava, Li, and
  Sengupta}]{srivastava2012testing}
Srivastava R., Li P., and Sengupta D. (2012).
\newblock Testing for membership to the {IFRA} and the {NBU} classes of
  distributions.
\newblock {\em Proceedings of the Fifteenth International Conference on
  Artificial Intelligence and Statistics (AISTATS-12)\/}, 22:1099--1107.

\end{thebibliography}

\end{document}